\newif\iffinal
\finalfalse	

\documentclass[letterpaper,11pt,reqno]{amsart} 

\RequirePackage[utf8]{inputenc}
\usepackage[portrait,margin=3cm]{geometry}
\usepackage{color}              
\usepackage[usenames,dvipsnames]{xcolor}

%
%
%

\usepackage{mathrsfs,soul}
\usepackage{hyperref}
\usepackage{amssymb,amsthm,amsmath,amsfonts,amsbsy,latexsym,dsfont}
\usepackage[textsize=small]{todonotes}       
\usepackage{graphicx}
\usepackage[numeric,initials,nobysame]{amsrefs}
\usepackage{upref,setspace}

\definecolor{BrickRed}{rgb}{0.65,0.08,0}

\usepackage{enumerate}

\numberwithin{equation}{section}
\numberwithin{figure}{section}
\numberwithin{table}{section}

\sloppy


\newtheorem{Lemma}{Lemma}[section]
\newtheorem{Proposition}{Proposition}[section]

\newtheorem{Theorem}{Theorem}[section]

\newtheorem{Condition}{Condition}[section]

\newtheorem{Remark}{Remark}[section]
\newtheorem{Example}{Example}[section]



\newcommand{\Erdos}{Erd\H{o}s-R\'enyi }

\newcommand{\half}{{\frac{1}{2}}}

\newcommand{\lan}{\langle}
\newcommand{\ran}{\rangle}
\newcommand{\lfl}{\lfloor}
\newcommand{\rfl}{\rfloor}

\newcommand{\Rd}{{\Rmb^d}}



\newcommand{\Cmb}{{\mathbb{C}}}

\newcommand{\Emb}{{\mathbb{E}}}

\newcommand{\Nmb}{{\mathbb{N}}}

\newcommand{\Pmb}{{\mathbb{P}}}

\newcommand{\Rmb}{{\mathbb{R}}}
\newcommand{\Smb}{{\mathbb{S}}}



\newcommand{\Bmc}{{\mathcal{B}}}
\newcommand{\Cmc}{{\mathcal{C}}}

\newcommand{\Emc}{{\mathcal{E}}}
\newcommand{\Fmc}{{\mathcal{F}}}
\newcommand{\Gmc}{{\mathcal{G}}}

\newcommand{\Jmc}{{\mathcal{J}}}

\newcommand{\Lmc}{{\mathcal{L}}}
\newcommand{\Mmc}{{\mathcal{M}}}

\newcommand{\Pmc}{{\mathcal{P}}}

\newcommand{\Rmc}{{\mathcal{R}}}

\newcommand{\Tmc}{{\mathcal{T}}}





\newcommand{\one}{{\boldsymbol{1}}}




\newcommand{\mubar}{{\bar{\mu}}}





\newcommand{\btil}{{\tilde{b}}}

\newcommand{\Gtil}{{\tilde{G}}}

\newcommand{\mutil}{{\tilde{\mu}}}

\newcommand{\Tmctil}{{\tilde{\Tmc}}}

\newcommand{\xtil}{{\tilde{x}}}\newcommand{\Xtil}{{\tilde{X}}}












\def\qed{ \hfill $\blacksquare$}  

\setcounter{secnumdepth}{3} 
\setcounter{tocdepth}{2}    
\numberwithin{equation}{section}

\begin{document}

	\title{Graphon mean field systems}

	\makeatletter
	\@namedef{subjclassname@2020}{\textup{2020} Mathematics Subject Classification}
	\makeatother

	\date{\today}
	\subjclass[2020]{
	05C80
	60J60  	
	60K35%
	}
	\keywords{
	graphons,
	graphon particle systems,
	mean field interaction,
	heterogeneous interaction,
	networks,
	percolation%
	}
	 
	\author[Bayraktar]{Erhan Bayraktar}
		\address{Department of Mathematics, University of Michigan, 530 Church Street, Ann Arbor, MI 48109}
		\thanks{E.\ Bayraktar is partially supported by the National Science Foundation under grant DMS2106556 and by the Susan M.\ Smith chair.} 
	\author[Chakraborty]{Suman Chakraborty} 
		\address{Simons Institute of Theory of Computing, UC Berkeley, CA 94720}	
	\author[Wu]{Ruoyu Wu}
		\address{Department of Mathematics, Iowa State University, 411 Morrill Road, Ames, IA 50011} 
		\email{erhan@umich.edu, sumanc@berkeley.edu, ruoyu@iastate.edu}
			 
	\begin{abstract}
	We consider heterogeneously interacting diffusive particle systems and their large population limit.
	The interaction is of mean field type with weights characterized by an underlying graphon.
	A law of large numbers result is established as the system size increases and the underlying graphons converge.
	The limit is given by a graphon mean field system consisting of independent but heterogeneous nonlinear diffusions whose probability distributions are fully coupled.
	Well-posedness, continuity and stability of such systems are provided.
	We also consider a not-so-dense analogue of the finite particle system, obtained by percolation with vanishing rates and suitable scaling of interactions. 
	A law of large numbers result is proved for the convergence of such systems to the corresponding graphon mean field system.
	\end{abstract}	
	
	\maketitle

	\tableofcontents

\section{Introduction}
\label{sec:intro}

In this work we study mean field diffusive particle systems with heterogeneous interaction and their large population limit.
The interaction is of mean field type and is characterized through a step graphon.
More precisely, denoting by $X_i^n$ the state of the $i$-th particle,
\begin{align}
	X_i^n(t) & = X_{\frac{i}{n}}(0) + \int_0^t \frac{1}{n} \sum_{j=1}^n \xi_{ij}^n b(X_i^n(s),X_j^n(s))\,ds
	\notag \\
	& \quad + \int_0^t \frac{1}{n} \sum_{j=1}^n \xi_{ij}^n \sigma(X_i^n(s),X_j^n(s))\,dB_{\frac{i}{n}}(s), \quad i \in \{1,\dotsc,n\},
	\label{eq:dense-system-intro}
\end{align}
where $b$ and $\sigma$ are some Lipschitz functions, $\{B_u : u \in [0,1]\}$ are i.i.d.\ $d$-dimensional Brownian motions, and $X_u(0)$ is a collection of independent $\Rmb^d$-valued random variables, with probability distribution $\mu_u(0)$ for each $u \in [0,1]$, and independent of $\{B_u : u \in [0,1]\}$.
Here $\xi_{ij}^n {\in [0,1]}$ determines the interaction between particles $i$ and $j$, and depends on some step graphon $G_n$ converging to a limiting graphon in the cut metric.

The classic mean field system with homogeneous interaction, which corresponds to $\xi_{ij}^n \equiv 1$ in \eqref{eq:dense-system-intro}, dates back to works of Boltzmann, Vlasov, McKean and others (see \cite{Sznitman1991,McKean1967propagation,Kolokoltsov2010} and references therein).
While the original motivation for the study came from statistical physics, similar models have arisen in many different application areas, including economics, chemical and biological systems, communication networks and social sciences (see e.g.\ \cite{BudhirajaDupuisFischerRamanan2015limits} for an extensive list of references).
Systems with inhomogeneity described by multi-type populations, where the interaction between two particles depends on their types, have been proposed in social sciences \cite{ContucciGalloMenconi2008phase}, statistical mechanics \cite{Collet2014macroscopic}, neurosciences \cite{BaladronFaugeras2012}, and others \cite{BudhirajaWu2016some,NadtochiyShkolnikov2018mean}. 
In recent years, there have been an increasing attention on the study of mean field systems on large networks, including {\cite{Delattre2016,BhamidiBudhirajaWu2019weakly,Delarue2017mean,CoppiniDietertGiacomin2019law,OliveiraReis2019interacting,BudhirajaMukherjeeWu2019supermarket,BayraktarWu2019mean,Lucon2020quenched,Medvedev2014nonlinear1,Medvedev2014nonlinear2,KaliuzhnyiMedvedev2018mean,DupuisMedvedev2020large,Coppini2019long,BetCoppiniNardi2020weakly}}, where the majority of focus is on \Erdos {type} random graphs.
Among these, \cite{OliveiraReis2019interacting} allows the edge probability between two nodes to depend on independent random media variables associated with these two nodes, and \cite{Delarue2017mean} analyzes the mean field game on \Erdos random graphs. 

We extend the study of mean field models to a much larger class of graphs. 
To put our work in context, let us describe the class of graphs that we are going to consider. 
We consider sequences of dense graphs that converge to a limit in an appropriate sense (see \cite{Lovasz2012large} and references therein). 
Roughly speaking, this limit theory treats a graph $\mathbb{G}_n$ on $n$ vertices as a function ${G_n} \colon I \times I \to \Rmb$, where $I:=[0,1]$. 
This function is what we call a graphon.
Then $\mathbb{G}_n$ is said to converge to the function $G$ if and only if ${G_n}$ converges to $G$ in ``cut metric" (see Section \ref{sec:limit-system} for the definition). 
We consider mean field models on such converging graph sequences. 
The motivation for considering such graph sequences is that aside from its theoretical implications, many important graph models (both random and deterministic) have been shown to converge to a limit. 
See \cite{chatterjee2013estimating,bhamidi2018weighted,basak2016large,krivelevich2006pseudo} for many such examples. 
Unfortunately, the graph limit theory only works for dense graphs (graphs with order of $n^2$ many edges). 
To extend our study to the not-so-dense settings we also consider bond percolated models of graphs; see \cite{bollobas2010percolation}, {and also see \cite{BorgsChayesCohnZhao2019Lp,BorgsChayesCohnZhao2018Lp} and references therein for more interests in and analysis on not-so-dense graphs}.

Among the aforementioned works on mean field systems on large networks, the ones closest to our setup are \cite{Lucon2020quenched} and \cite{BetCoppiniNardi2020weakly}.
The work \cite{Lucon2020quenched} considers large population of diffusions interacting on a random directed inhomogeneous graph, and obtains quenched convergence of particles, empirical measures, and also spatial fields (empirical measures of particle states and positions/indices).
Compared with our models and results, there are two main differences.
Firstly, \cite{Lucon2020quenched} considers independent Bernoulli edges scaled by a certain dilution parameter and assumes the collection of scaled success probabilities, viewed as graphons/kernels, converges in a sense stronger than the usual cut metric of graphons (Assumption 2.9), with Lipschitz assumptions on initial states (Assumption 2.6) and suitable regularity assumptions on the graphons (Assumptions 2.11 and 2.14).
While we consider both Bernoulli edges and $[0,1]$-valued weighted edges, converging in the cut metric, with continuity assumptions on initial states.
Secondly, \cite{Lucon2020quenched} obtains quenched convergence, conditioning on the realization of random graphs; while we are mostly concerned with the annealed setup and some of our results (in Theorems \ref{thm:dense-LLN} and \ref{thm:sparse-LLN}) could be applied to the quenched setup (see Condition \ref{cond:dense-Gn}(b) and Remark \ref{rmk:3.3}(b)).

After the initial submission of this work, another paper on graphon particle systems (\cite{BetCoppiniNardi2020weakly}) was posted on arXiv a few months later.
\cite{BetCoppiniNardi2020weakly} considers weakly interacting oscillators on dense random graphs and obtains an annealed law of large numbers for the empirical measure.
The limiting system is given by describing the evolution of a particle uniformly chosen from $I$.
{
However, \cite{BetCoppiniNardi2020weakly} does not consider not-so-dense graphs as in our Section \ref{sec:sparse-system}, and their results do not imply our results in Section \ref{sec:dense-system} (and ours do not imply theirs either).
In particular, \cite{BetCoppiniNardi2020weakly} assumes $\{0,1\}$-valued random adjacency matrix $\xi^n$ for $n$-particle systems converging to a possibly random graphon in the cut distance (and the argument is expected to work for $[0,1]$-valued weighted edges as well)}, while we consider both such random $\xi^n$ and deterministic $[0,1]$-valued $\xi^n$.
Also, \cite{BetCoppiniNardi2020weakly} assumes homogeneous initial state distribution and constant diffusion coefficient, while we allow for heterogeneous initial state distribution and state-and-interaction-dependent diffusion coefficient.
Lastly, particles' state space is bounded (one dimensional torus) in \cite{BetCoppiniNardi2020weakly}, and hence the system coefficients are bounded, while our work allows coefficients to have linear growth.

We should point out that the use of graphons to analyze heterogeneous interaction in game theory emerged recently (see e.g.\ \cite{CainesHuang2018graphon,PariseOzdaglar2019graphon,Carmona2019stochastic}). 
Among these, \cite{PariseOzdaglar2019graphon} analyzes static graphon games and the convergence of the $n$-player game with interaction network sampled from a given graphon.
Static graphon games are considered in \cite{Carmona2019stochastic} and the convergence of the $n$-player game with general interaction network that converges to a given graphon is obtained.
The diffusive dynamics for the states of the particles, with constant diffusion coefficients, is considered in \cite{CainesHuang2018graphon} for continuum graphon mean field games.
However, \cite{CainesHuang2018graphon} does not address the convergence problem of the finite particle system to the limiting problem they analyze.

The goal of this work is to study the asymptotics of the diffusive particle system \eqref{eq:dense-system-intro} with heterogeneous interaction, and their not-so-dense analogue in \eqref{eq:sparse-system-intro} below. 
Our first main result is the existence, uniqueness, continuity, and stability property (Proposition \ref{prop:limit-existence-uniqueness} and Theorem \ref{thm:limit-system-property}) for the limiting graphon particle system \eqref{eq:limit-system-dense}, consisting of a continuum of independent but non-identical nonlinear diffusions. 
Among these, the stability property (Theorem \ref{thm:limit-system-property}(c)) in particular says that the system solution converges in a suitable sense provided that the underlying graphon converges in the cut metric.
The proof makes use of a coupling argument but challenge is two fold:
First, the cut metric is in general very weak, in that 
the convergence $G_n \to G$ does not necessarily imply the $L^2$-convergence of $G_n$ as operators on $I \times I$, namely one may not have $\int_{I \times I} [G_n(x,y)-G(x,y)]^2 \,dx\,dy \to 0$.
However, one could alternatively view $G_n$ as operators from $L^\infty(I)$ to $L^1(I)$ that are continuous with respect to the cut metric (see Remark \ref{rmk:graphon-convergence}).
This observation is actually an important building block of many proofs in this work.
Second, the interaction in the graphon particle system \eqref{eq:limit-system-dense} does not match with such a choice of operator, unless the coefficients $b(x,y)$ and $\sigma(x,y)$ could be decomposed as the product of functions of each variable.
For this, a truncation and approximation argument is applied to these coefficients, and the associated errors are carefully analyzed (see Section \ref{sec:limit-system-property-pf}).

The second main result is the convergence of the $n$ particle system \eqref{eq:dense-system-intro} to the graphon particle system \eqref{eq:limit-system-dense}, for a sequence of convergent underlying step graphons (graphons with blockwise constant values; see \eqref{eq:step-graphon}).
A law of large numbers (LLN) is established in Theorem \ref{thm:dense-LLN}, which says that the empirical measure of $n$ particles in \eqref{eq:dense-system-intro} converges in probability to the averaged distribution of a continuum particles in \eqref{eq:limit-system-dense}. 
This is first proved in Lemma \ref{lem:new1} under certain regularity assumptions of the graphon (Condition \ref{cond:limit-G}(b)), by applying again a truncation and approximation argument to the system coefficients.
Then the stability property (Theorem \ref{thm:limit-system-property}(c)) is used in Section \ref{sec:dense-LLN-pf} to show that the result also holds for general graphons.
In Theorem \ref{thm:dense-LLN-special}, we also obtain a precise particle-wise uniform convergence rate, when the underlying step graphons are sampled from a given graphon with a certain continuity property.

Our third main result is the analysis of the not-so-dense analogue of \eqref{eq:dense-system-intro}:
\begin{align}
	X_i^n(t) & = X_{\frac{i}{n}}(0) + \int_0^t \frac{1}{n\beta_n} \sum_{j=1}^n \xi_{ij}^n b(X_i^n(s),X_j^n(s))\,ds + \int_0^t \sigma(X_i^n(s))\,dB_{\frac{i}{n}}(s), \quad i \in \{1,\dotsc,n\}, 
	\label{eq:sparse-system-intro}
\end{align}
where $\beta_n \in (0,1]$ is some sequence of numbers that may converge to $0$, and $\{\xi_{ij}^n\}$ are independent Bernoulli random variables with possibly vanishing probabilities (of order $\beta_n$) associated with the underlying step graphon.
Similar to mean field systems on \Erdos random graphs \cite{BhamidiBudhirajaWu2019weakly,Delarue2017mean,OliveiraReis2019interacting}, the strength of interaction here is scaled by the order of the number of neighbors
(see Remark \ref{rmk:sparse} for more explanations).
In Theorem \ref{thm:sparse-LLN}, we show a LLN that the limit of such systems is again given by a graphon particle system, provided that the underlying step graphons converge and $$\lim_{n \to \infty} n\beta_n = \infty.$$ 
The main challenge lies in the heterogeneity of the system and the average of interactions of order $\xi_{ij}^n / \beta_n$ that is unbounded in $n$.
The unbounded interaction $\xi_{ij}^n / \beta_n$ is taken care of in \cite{BhamidiBudhirajaWu2019weakly,Delarue2017mean,OliveiraReis2019interacting} using exchangeability.
However, due to the lack of exchangeability here, a new approach is needed.
Indeed, besides the application of coupling, truncation and approximation arguments, the key ingredient in the proof of Theorem \ref{thm:sparse-LLN} is \eqref{eq:sparse-R2} in Lemma \ref{lem:new-sparse}, which shows that the expected effect of unbounded interactions $\xi_{ij}^n/\beta_n$ on the coupled difference $|X_j^n-X_{\frac{j}{n}}|$ is roughly the same as $\Emb[\xi_{ij}^n/\beta_n] \Emb|X_j^n-X_{\frac{j}{n}}|$, up to some constant multiples and negligible errors.
The proof of Lemma \ref{lem:new-sparse} applies a collection of change of measure arguments separately to each pair (resp.\ triplet) of certain auxiliary particles and the edge (resp.\ edges) connecting them.
For each change of measure, the Radon-Nikodym derivative and the difference among pre-limit, limiting and auxiliary particles are carefully analyzed.
Due to the technical application of the Girsanov's Theorem, the diffusion coefficient in \eqref{eq:sparse-system-intro} is taken to be state-dependent only.
Lastly, we also obtain a precise rate of convergence in Theorem \ref{thm:sparse-LLN-special}, when the underlying step graphons are sampled from a given graphon with a certain continuity property.

\subsection{Organization}
The paper is organized as follows.
In Section \ref{sec:limit-system} we analyze the graphon particle system \eqref{eq:limit-system-dense}.
The existence and uniqueness is proved in Proposition \ref{prop:limit-existence-uniqueness}.
The continuity and stability of the system is presented in Theorem \ref{thm:limit-system-property} in Section \ref{sec:limit-system-property}.
Concrete examples are given in Section \ref{sec:limit-example}.
In Section \ref{sec:dense-system} we study the convergence of the $n$ particle system \eqref{eq:dense-system-intro}.
The LLN is given in Theorem \ref{thm:dense-LLN}, and a precise rate of convergence is given in Theorem \ref{thm:dense-LLN-special} under conditions.
In Section \ref{sec:sparse-system} we study the convergence of the $n$ particle system with not-so-dense interaction \eqref{eq:sparse-system-intro}.
The LLN is given in Theorem \ref{thm:sparse-LLN}, and a precise rate of convergence is given in Theorem \ref{thm:sparse-LLN-special} under conditions.
Sections \ref{sec:limit-system-pf}, \ref{sec:dense-system-pf} and \ref{sec:sparse-system-pf} are devoted to the proofs of results in Section \ref{sec:limit-system}, \ref{sec:dense-system} and \ref{sec:sparse-system}, respectively.

We close this section by introducing some frequently used notation.

\subsection{Notation} 
\label{sec:notation}

Given a Polish space $\Smb$, denote by $\Pmc(\Smb)$ the space of probability measures on $\Smb$ endowed with the topology of weak convergence.
For $\mu \in \Pmc(\Smb)$ and a $\mu$-integrable function $f \colon \Smb \to \Rmb$, let $\lan f,\mu \ran := \int_\Smb f(x)\,\mu(dx)$.
For $f \colon \Smb \to \Rmb$, let $\|f\|_\infty := \sup_{x \in \Smb} |f(x)|$.
The probability law of a random variable $X$ will be denoted by $\Lmc(X)$. 
Fix $T \in (0,\infty)$ and all processes will be considered over the time horizon $[0,T]$.
Denote by $\Cmb([0,T]:\Smb)$ the space of continuous functions from $[0, T]$ to $\Smb$, endowed with the topology of uniform convergence.
Let $\Cmc_d := \Cmb([0,T]:\Rd)$ and $\|x\|_{*,t} := \sup_{0 \le s \le t} |x_s|$ for $x \in \Cmc_d$ and $t \in [0,T]$.
We will use $\kappa$ to denote various constants in the paper and $\kappa(m)$ to emphasize the dependence on some parameter $m$.
Their values may change from line to line.
Expectations under $\Pmb$ will be denoted by $\Emb$.
To simplify the notation, we will usually write $\Emb[ X^2 ]$ as $\Emb X^2$.

\section{Graphon particle systems}
\label{sec:limit-system}

We follow the notation used in \cite[Chapters 7 and 8]{Lovasz2012large}.
Let $I := [0,1]$.
Denote by $\Gmc$ the space of all bounded symmetric measurable functions $G \colon I \times I \to \Rmb$.
A graphon $G$ is an element of $\Gmc$ with $0 \le G \le 1$.
The cut norm on $\Gmc$ is defined by
$$\|G\|_\square := \sup_{S,T \in \Bmc(I)} \left| \int_{S \times T} G(u,v)\,du\,dv \right|,$$
and the corresponding cut metric is defined by 
$$d_\square(G_1,G_2) := \|G_1-G_2\|_\square.$$

\begin{Remark}
	\label{rmk:graphon-convergence}
	We will also view a graphon $G$ as an operator from $L^\infty(I)$ to $L^1(I)$ with the operator norm
	$$\|G\|:=\|G\|_{\infty\to1}:=\sup_{\|g\|_\infty \le 1} \|Gg\|_1 = \sup_{\|g\|_\infty \le 1} \int_I \left| \int_I G(u,v) g(v) \, dv \right| du.$$	
	From \cite[Lemma 8.11]{Lovasz2012large} it follows that if $\|G_n-G\|_\square \to 0$ for a sequence of graphons $G_n$, then $\|G_n-G\| \to 0$.
\end{Remark}

Given a graphon $G$ and an initial distribution $\mu(0) := (\mu_u(0) \in \Pmc(\Rmb^d) : u \in I)$, consider the following graphon particle system:
\begin{align}
	X_u(t) & = X_u(0) + \int_0^t \int_I \int_{\Rmb^d} b(X_u(s),x)G(u,v)\,\mu_{v,s}(dx)\,dv\,ds \notag \\
	& \quad + \int_0^t \int_I \int_{\Rmb^d} \sigma(X_u(s),x)G(u,v)\,\mu_{v,s}(dx)\,dv\,dB_u(s), \:\: \mu_{u,t}=\Lmc(X_u(t)), \:\: u \in I. \label{eq:limit-system-dense}
\end{align}
As introduced in Section \ref{sec:intro}, here $\{B_u : u \in I\}$ are i.i.d.\ $d$-dimensional Brownian motions, and $X_u(0)$ is a collection of independent $\Rmb^d$-valued random variables, with law $\mu_u(0)$ for each $u \in I$, and independent of $\{B_u : u \in I\}$, defined on some filtered probability space $(\Omega,\Fmc,\Pmb,\{\Fmc_t\})$.
We make the following assumptions on the initial states and system coefficients.

\begin{Condition}
	\phantomsection
	\label{cond:standing}
	\begin{enumerate}[(a)]
	\item The map $I \ni u \mapsto \mu_u(0)=\Lmc(X_u(0)) \in \Pmc(\Rmb^d)$ is measurable. There exists some $\varepsilon \in (0,\infty)$ such that
	\begin{equation}
		\label{eq:initial-moment}
		\sup_{u \in I} \Emb |X_u(0)|^{2+\varepsilon} < \infty.
	\end{equation}
	\item The coefficients $b$ and $\sigma$ are Lipschitz functions, namely there exists some $K \in [0,\infty)$ such that
	$$|b(x,y)-b(x',y')| + |\sigma(x,y)-\sigma(x',y')| \le K(|x-x'|+|y-y'|), \quad \forall x,x',y,y' \in \Rd.$$ 
	\end{enumerate}
\end{Condition}

\begin{Remark}
	The assumption that $\varepsilon>0$ is used for some technical arguments treating the unboundedness of $b$ and $\sigma$.
	If $b$ and $\sigma$ are bounded and Lipschitz functions, then one could take $\varepsilon=0$ (see Remarks \ref{rmk:epsilon-1} and \ref{rmk:epsilon-2}).
\end{Remark}

The following proposition gives well-posedness for the system \eqref{eq:limit-system-dense}. 
For graphon mean field games with diffusive dynamics, under certain regularity conditions, existence and uniqueness were shown in \cite[Theorem 2]{CainesHuang2018graphon} (and the journal version \cite[Theorem 3.12]{CainesHuang2020graphon}).
But we provide a proof in Section \ref{sec:limit-existence-uniqueness-pf} as our graphon $G$ may not be continuous.

\begin{Proposition}
	\label{prop:limit-existence-uniqueness}
	Suppose Condition \ref{cond:standing} holds. Then
	there exists a unique pathwise solution $\{X_u\}$ to the graphon particle system \eqref{eq:limit-system-dense}. Moreover, $\sup_{u \in I} \Emb\|X_u\|_{*,T}^{2+\varepsilon}<\infty$ and the map $I \ni u \mapsto \mu_u \in \Pmc(\Cmc_d)$ is measurable.
\end{Proposition}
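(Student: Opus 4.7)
The plan is to formulate \eqref{eq:limit-system-dense} as a fixed point problem on the space $\Xi$ of measurable families $\mu = (\mu_u \in \Pmc(\Cmc_d) : u \in I)$ with $\sup_{u \in I} \int \|x\|_{*,T}^2 \,\mu_u(dx) < \infty$, and then solve it by Picard iteration. For any $\nu \in \Xi$, the coefficients
\begin{equation*}
b_\nu(u,y,s) := \int_I \int_{\Rmb^d} b(y,x) G(u,v) \,\nu_{v,s}(dx)\,dv, \qquad \sigma_\nu(u,y,s) := \int_I \int_{\Rmb^d} \sigma(y,x) G(u,v)\,\nu_{v,s}(dx)\,dv
\end{equation*}
are jointly measurable in $(u,y,s)$, bounded by $\|b\|_\infty$ and $\|\sigma\|_\infty$, and Lipschitz in $y$ uniformly in $(u,s)$, using $0 \le G \le 1$ together with the Lipschitz assumption on $b,\sigma$. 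Standard strong-solution theory then yields, for each $u \in I$, a pathwise unique solution $Y_u^\nu$ of the SDE with coefficients $b_\nu(u,\cdot,\cdot),\sigma_\nu(u,\cdot,\cdot)$ driven by $B_u$ with initial value $X_u(0)$; setting $\Phi(\nu)_u := \Lmc(Y_u^\nu) \in \Pmc(\Cmc_d)$ turns \eqref{eq:limit-system-dense} into the fixed point equation $\Phi(\mu)=\mu$.

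For the contraction step I would equip $\Xi$ with the family of pseudometrics $d_t(\mu,\nu)^2 := \sup_{u \in I} W_{2,t}(\mu_u,\nu_u)^2$, $t \in [0,T]$, where $W_{2,t}$ is the $2$-Wasserstein distance on $\Cmc_d$ built from the seminorm $\|\cdot\|_{*,t}$. Coupling $Y_u^\mu$ and $Y_u^\nu$ via the common data $(X_u(0), B_u)$, splitting $b_\mu - b_\nu$ into a $y$-Lipschitz piece and a measure-Lipschitz piece (the latter estimated through Kantorovich duality and Cauchy--Schwarz in $v$, using $\|G\|_\infty \le 1$), and applying BDG, I expect the bound
\begin{equation*}
\Emb \|Y_u^\mu - Y_u^\nu\|_{*,t}^2 \le \kappa \int_0^t \Emb \|Y_u^\mu - Y_u^\nu\|_{*,s}^2\,ds + \kappa \int_0^t \int_I W_{2,s}(\mu_v,\nu_v)^2 \,dv\,ds
\end{equation*}
with $\kappa$ depending only on $T$ and the Lipschitz constants. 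Gr\"onwall on the first term and taking supremum over $u$ yield $d_t(\Phi(\mu),\Phi(\nu))^2 \le \kappa \int_0^t d_s(\mu,\nu)^2 \,ds$, and iterating gives $d_T(\Phi^n(\mu),\Phi^n(\nu))^2 \le (\kappa T)^n/n! \cdot d_T(\mu,\nu)^2$, so a large iterate of $\Phi$ is a strict contraction on $(\Xi, d_T)$, producing a unique fixed point. Applying the same coupling estimate to any two solutions of \eqref{eq:limit-system-dense} driven by the common data gives pathwise uniqueness directly.

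Finally, to establish measurability of $u \mapsto \mu_u$, I would propagate it through the iteration. Start with $\mu^0_u := \Lmc(X_u(0)\text{ viewed as a constant path in } \Cmc_d)$, which is measurable by Condition \ref{cond:initial}. At step $n$, the coefficients $b_{\mu^n},\sigma_{\mu^n}$ are jointly measurable in $(u,y,s)$ by Fubini, so the strong solution $Y_u^{\mu^n}$ can be realized jointly measurably in $(u,\omega)$ through the standard Picard/Euler--Maruyama scheme, and pushing forward yields a measurable $u \mapsto \mu^{n+1}_u$. Since pointwise-in-$u$ limits of measurable maps into the Polish space $\Pmc(\Cmc_d)$ remain measurable, the fixed point inherits this property. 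I expect the bookkeeping in this last measurability step to be the main, though unsurprising, obstacle; the analytic estimates are a routine adaptation of the classical McKean--Vlasov argument, with the uniform boundedness of $G$ playing the role that translation invariance (or exchangeability) does in the homogeneous setting.
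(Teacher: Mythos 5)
Your proposal follows essentially the same route as the paper: freeze the measure family to define $\Phi$, establish the Wasserstein integral contraction $W_{2,t}^\Mmc(\Phi(\mu),\Phi(\nu))^2 \le \kappa\int_0^t W_{2,s}^\Mmc(\mu,\nu)^2\,ds$ by coupling through common $(X_u(0),B_u)$ and Gr\"onwall, iterate, and propagate measurability of $u\mapsto\mu_u$ through a Picard/Euler--Maruyama scheme. The only caveat is that the measurability step you label ``unsurprising bookkeeping'' occupies the bulk of the paper's proof — it is handled there by an induction over Picard iterates $\Xtil^k_u$ combined with a time-discretization $\Xtil^{k,\delta}_u$ to reduce to finitely many measurable evaluations of $(\Xtil^{k-1}_u,B_u)$, followed by passage to the limit in the Polish space $\Pmc(\Cmc_d)$ — but your outline correctly identifies the strategy.
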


\begin{Remark}
	\begin{enumerate}[(a)]
	\item 
		We note that processes $\{X_u\}$ in \eqref{eq:limit-system-dense} are independent but not identically distributed nonlinear diffusions.
		In particular, the Fokker--Planck equations for {the probability distributions of} $\{X_u\}$ are nonlinear and fully coupled.
		In general, each $X_u$ may not be a McKean--Vlasov process, as the probability law $\mu_u$ plays a negligible role in its evolution. 
	\item
		{We also note that we are not assuming $u \mapsto B_u$ is measurable or claiming $u \mapsto X_u$ is measurable. Proposition \ref{prop:limit-existence-uniqueness} gives a measurable dependence of the \textbf{law} $\mu_u$ on $u$, which is sufficient for later use since \eqref{eq:limit-system-dense} involves integrals of $\mu_u$, instead of $X_u$ or $B_u$, with respect to $u \in [0,1]$.}
	\end{enumerate}	
\end{Remark}

In order to analyze the collection of probability laws $\mu = (\mu_u : u \in I)$, consider the following space of probability measures
\begin{align*}
	\Mmc & := \{ \nu = (\nu_u : u \in I) \in [\Pmc(\Cmc_d)]^I \,|\, I \ni u \mapsto \nu_u \in \Pmc(\Cmc_d) \mbox{ is measurable and } \\
	& \qquad\qquad\qquad\qquad\qquad\qquad\qquad \sup_{u \in I} \int_{\Cmc_d} \|x\|_{*,T}^2 \,\nu_u(dx) < \infty \}.
\end{align*}
For the convenience of analysis (see e.g.\ Remark \ref{rmk:Wass-dual}), we make use of the following Wasserstein-$2$ metrics
\begin{align}
	W_2(\mu,\nu) & := \left( \inf \left\{ \Emb |X-\Xtil|^2 : \Lmc(X)=\mu,\Lmc(\Xtil)=\nu \right\} \right)^{1/2}, \: \mu,\nu \in \Pmc(\Rmb^d), \label{eq:Wasserstein-R} \\
	W_{2,t}(\mu,\nu) & := \left( \inf \left\{ \Emb \|X-\Xtil\|_{*,t}^2 : \Lmc(X)=\mu,\Lmc(\Xtil)=\nu \right\} \right)^{1/2}, \: t \in [0,T], \: \mu,\nu \in \Pmc(\Cmc_d), \label{eq:Wasserstein} \\
	W_{2,t}^\Mmc(\mu,\nu) & := \sup_{u \in I} W_{2,t}(\mu_u,\nu_u), \: t \in [0,T], \: \mu,\nu \in \Mmc. \label{eq:Wasserstein-M}
\end{align}

\begin{Remark}
	\label{rmk:Wass-dual}
	From \eqref{eq:Wasserstein-R}, \eqref{eq:Wasserstein} and \eqref{eq:Wasserstein-M} clearly we have
	\begin{align*}
		W_2(\mu,\nu) & \ge \sup_{f} \left| \int_{\Rd} f(x) \, \mu(dx) - \int_{\Rd} f(x) \, \nu(dx) \right|, \quad \mu,\nu \in \Pmc(\Rmb^d), \\
		W_{2,t}(\mu,\nu) & \ge \sup_{f} \left| \int_{\Rd} f(x) \, \mu_t(dx) - \int_{\Rd} f(x) \, \nu_t(dx) \right|, \quad \mu,\nu \in \Pmc(\Cmc_d), \\
		W_{2,t}^\Mmc(\mu,\nu) & \ge \sup_{u \in I} \sup_{f} \left| \int_{\Rd} f(x) \, \mu_{u,t}(dx) - \int_{\Rd} f(x) \, \nu_{u,t}(dx) \right|, \quad \mu,\nu \in \Mmc, 	
	\end{align*}
	for each $t \in [0,T]$,	where the supremum is taken over all $f \colon \Rmb^d \to \Rmb$ such that the integrals exist and $|f(x)-f(\xtil)| \le |x-\xtil|$ for $x,\xtil \in \Rd$.
\end{Remark}

\subsection{Continuity and stability of the system}
\label{sec:limit-system-property}

In this section we are interested in establishing the continuity and stability properties for the graphon particle system \eqref{eq:limit-system-dense}.
We usually make the following assumption on the initial distribution $\mu(0)$ and the graphon $G$.

\begin{Condition}
	\phantomsection
	\label{cond:limit-G}
	There exists a finite collection of intervals $\{I_i : i=1,\dotsc,N\}$ for some $N \in \Nmb$, such that $\cup_{i=1}^N I_i = I$ and for each $i \in \{1,\dotsc,N\}$:	
	\begin{enumerate}[(a)]
	\item The map $I_i \ni u \mapsto \mu_u(0) \in \Pmc(\Rd)$ is continuous with respect to the $W_2$ metric.
	
	\item For each interior point $u \in I_i$, there exists a subset $A_u \subset I$ such that $\lambda_I(A_u)=0$ and $G(u,v)$ is continuous at $(u,v) \in I \times I$ for each $v \in I \setminus A_u$, where $\lambda_I$ denotes the Lebesgue measure on $I$.
	\end{enumerate}
\end{Condition}

\begin{Remark}
	Condition \ref{cond:limit-G}(b) is not necessary for, but will help strengthen, convergence results in Sections \ref{sec:dense-system} and \ref{sec:sparse-system}.
	It holds naturally if $G$ is continuous, or if $G$ is continuous when restricted to each block $I_i \times I_j$.	
	For example, it holds for graphons
	$G(u,v)=\one_{[0,\frac{1}{2}]^2}(u,v)$ and 
	$G(u,v)=\one_{\{|u-v| \le \frac{1}{4}\}}(u,v)$.
\end{Remark}

Sometimes we may work with a special class of $\mu(0)$ and $G$ having certain Lipschitz properties.

\begin{Condition}
	\label{cond:limit-G-special}
	There exists some $\kappa \in (0,\infty)$ and a finite collection of intervals $\{I_i : i=1,\dotsc,N\}$ for some $N \in \Nmb$, such that $\cup_{i=1}^N I_i = I$ and
	\begin{align*}
		W_2(\mu_{u_1}(0),\mu_{u_2}(0)) & \le \kappa |u_1-u_2|, \quad u_1,u_2 \in I_i, \quad i \in \{1,\dotsc,N\}, \\
		|G(u_1,v_1)-G(u_2,v_2)| & \le \kappa (|u_1-u_2|+|v_1-v_2|), \: (u_1,v_1),(u_2,v_2) \in I_i \times I_j, \: i,j \in \{1,\dotsc,N\}.
	\end{align*}
\end{Condition}

The following theorem gives continuity and stability of the graphon particle system \eqref{eq:limit-system-dense}.
The proof is given in Section \ref{sec:limit-system-property-pf}.

\begin{Theorem}
	\phantomsection
	\label{thm:limit-system-property}
	Suppose Condition \ref{cond:standing} holds.
	\begin{enumerate}[(a)]
	\item (Continuity)
		Suppose Condition \ref{cond:limit-G} holds. Then for each $i \in \{1,\dotsc,N\}$, the map $I_i \ni u \mapsto \mu_u \in \Pmc(\Cmc_d)$ is continuous with respect to the $W_{2,T}$ metric.
	\item (Lipschitz continuity)
		Suppose Condition \ref{cond:limit-G-special} holds. Then there exists some $\kappa \in (0,\infty)$ such that $W_{2,T}(\mu_u,\mu_v) \le \kappa|u-v|$ whenever $u,v \in I_i$ for some $i \in \{1,\dotsc,N\}$.
	\item (Stability)
		Let $\mu^G$ be the probability law of \eqref{eq:limit-system-dense} associated with $G$. 
		The map $G \mapsto \mu^G$ is continuous in the sense that $\int_I [W_{2,T}(\mu^{G_n}_u,\mu^G_u)]^2 \, du \to 0$ if a sequence of graphons $G_n \to G$ in the cut metric. 
	\end{enumerate}	
\end{Theorem}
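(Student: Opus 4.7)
For parts (a) and (b), the plan is to use a synchronous coupling. Fix $u,u'\in I_i$ and introduce an auxiliary process $\Xtil_{u'}$ solving \eqref{eq:limit-system-dense} with graphon slice $G(u',\cdot)$ and with the laws $\{\mu_v\}$ already fixed, but driven by the Brownian motion $B_u$ and started from $\Xtil_{u'}(0)$ coupled optimally to $X_u(0)$ so that $\Emb|X_u(0)-\Xtil_{u'}(0)|^2=W_2(\mu_u(0),\mu_{u'}(0))^2$. Since the laws $\mu_v$ enter the coefficients only through spatial integrals against $G$, strong existence and uniqueness give $\Lmc(\Xtil_{u'})=\mu_{u'}$. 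Writing $F(x,w;s):=\int_I\int_\Rd b(x,y)\,G(w,v)\,\mu_{v,s}(dy)\,dv$ and decomposing
$$F(X_u(s),u;s)-F(\Xtil_{u'}(s),u';s) = [F(X_u(s),u;s)-F(\Xtil_{u'}(s),u;s)] + [F(\Xtil_{u'}(s),u;s)-F(\Xtil_{u'}(s),u';s)],$$
the first bracket is bounded by $\kappa|X_u(s)-\Xtil_{u'}(s)|$ via Lipschitz continuity of $b$ and $0\le G\le 1$, and the second by $\|b\|_\infty\int_I|G(u,v)-G(u',v)|\,dv$. Handling the diffusion the same way with BDG and Gronwall would yield
$$W_{2,T}(\mu_u,\mu_{u'})^2 \le \kappa\Bigl[W_2(\mu_u(0),\mu_{u'}(0))^2 + \Bigl(\int_I|G(u,v)-G(u',v)|\,dv\Bigr)^2\Bigr].$$
Then (a) follows from Condition \ref{cond:limit-G}(a) combined with Condition \ref{cond:limit-G}(b) and dominated convergence as $u'\to u$, while (b) follows because Condition \ref{cond:limit-G-special} makes each of the two terms $O(|u-u'|^2)$.

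For (c), I would couple $X_u^G$ and $X_u^{G_n}$ through the same Brownian motions and the same initial conditions, and control the increment $\Delta_u:=X_u^G-X_u^{G_n}$ in the averaged norm $\int_I\Emb\|\Delta_u\|_{*,t}^2\,du$. The drift difference splits into a state-Lipschitz piece controlled by $|\Delta_u(s)|$, a measure piece $\int_I G_n(u,v)\langle b(X_u^{G_n}(s),\cdot),\mu_{v,s}^G-\mu_{v,s}^{G_n}\rangle\,dv$ bounded by $\kappa\int_I W_{2,s}(\mu_v^G,\mu_v^{G_n})\,dv$ via Remark \ref{rmk:Wass-dual}, and a graphon piece $\int_I\langle b(X_u^{G_n}(s),\cdot),\mu_{v,s}^G\rangle[G(u,v)-G_n(u,v)]\,dv$. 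Treating the diffusion analogously via BDG, squaring, integrating over $u\in I$, applying Cauchy--Schwarz to the measure piece, and applying Gronwall would reduce (c) to showing that the spatial $L^2$-norm of the graphon piece tends to $0$ as $n\to\infty$.

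The main obstacle is precisely this graphon piece. Cut-metric convergence $G_n\to G$ is equivalent to $\|G_n-G\|_{\infty\to 1}\to 0$ by Remark \ref{rmk:graphon-convergence}, but that estimate only bounds $\int_I|\int_I(G_n-G)(u,v)g(v)\,dv|\,du$ for functions $g$ depending only on $v$, whereas $b(X_u^{G_n}(s),y)$ depends on $u$ through $X_u^{G_n}$ and is not separable. To bypass this, I would truncate $X_u^{G_n}(s)$ to a ball of radius $R$ (using the uniform $L^2$ moment bound inherited from Condition \ref{cond:initial}) and approximate $b(x,y)$ uniformly on the resulting compact set by a finite sum $\sum_{k=1}^K f_k(x)h_k(y)$ of products, turning the graphon piece into finitely many separable expressions $f_k(X_u^{G_n}(s))\int_I g_k(v)[G(u,v)-G_n(u,v)]\,dv$ with bounded $g_k(v):=\langle h_k,\mu_{v,s}^G\rangle$, to which the operator-norm bound of Remark \ref{rmk:graphon-convergence} directly applies. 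A careful tracking of the $R$-truncation and product-approximation errors (chosen uniformly in $n$) would then close the Gronwall loop and deliver (c).
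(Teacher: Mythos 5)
Your proposal is correct and takes essentially the same approach as the paper: a synchronous coupling driven by a common Brownian motion plus Gronwall for (a) and (b), and for (c) the same identical-noise coupling, the same three-way decomposition into a state-Lipschitz piece, a measure piece controlled via the Wasserstein dual, and a graphon piece handled by truncation to a compact plus uniform approximation of $b$ by finite sums of products so that the $L^\infty\to L^1$ operator-norm bound of Remark \ref{rmk:graphon-convergence} becomes applicable. The only cosmetic differences are whether one bounds the graphon term by $\bigl(\int_I|G(u_1,v)-G(u_2,v)|\,dv\bigr)^2$ or by $\int_I|G(u_1,v)-G(u_2,v)|^2\,dv$, and which of $G$, $G_n$, $\mu^G$, $\mu^{G_n}$ is held fixed in each summand of the decomposition; neither affects the conclusion.
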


\begin{Remark}
	\begin{enumerate}[(a)]
	\item Theorem \ref{thm:limit-system-property} (a,b) will be needed in Sections \ref{sec:dense-system} and \ref{sec:sparse-system} to analyze the convergence of $n$-particle systems with graphon mean field interactions.
	
	\item Theorem \ref{thm:limit-system-property}(c) implies that the solution law to \eqref{eq:limit-system-dense} depends on the underlying graphon $G$ in a continuous manner. This and Proposition \ref{prop:limit-existence-uniqueness} together guarantees that the analysis of \eqref{eq:limit-system-dense} is ``well-posed" according to Hadamard's principle (cf.\ \cite[Page 368]{Beeson2012foundations} and \cite[Page 38]{Hadamard2003lectures}).
	It will also be used in Sections \ref{sec:dense-system} and \ref{sec:sparse-system} to analyze the convergence of $n$-particle systems when $G$ is not necessarily continuous.
	\end{enumerate}
\end{Remark}

\subsection{Some special graphon particle systems}
\label{sec:limit-example}

In this section we introduce two special scenarios under which the system \eqref{eq:limit-system-dense} is more tractable. 

\begin{Example}
Suppose $G$ is blockwise constant (which arises as a limit of the stochastic block model), that is, there exists a finite collection of intervals $\{I_i : i=1,\dotsc,N\}$ and constants $\{p_{ij}=p_{ji} \in [0,1] : i,j = 1,\dotsc,N\}$ for some $N \in \Nmb$, such that $\cup_{i=1}^N I_i = I$ and
\begin{equation*}
	G(u,v) = p_{ij}, \quad (u,v) \in I_i \times I_j, \quad i,j \in \{1,\dotsc,N\}.
\end{equation*}
Also suppose the initial distribution is the same within each interval:
$$\mu_u(0)=\mu_v(0), \quad u,v \in I_i, \quad i \in \{1,\dotsc,N\}.$$
Due to the homogeneity in this case, the system \eqref{eq:limit-system-dense} could be written in terms of just $N$ representatives $u_i \in I_i$, $i \in \{1,\dotsc,N\}$:
\begin{align*}
	X_{u_i}(t) & = X_{u_i}(0) + \int_0^t \sum_{j=1}^N |I_j| p_{ij} \left( \int_{\Rmb^d} b(X_{u_i}(s),x)\,\mu_{u_j,s}(dx) \right) ds \\
	& \quad + \int_0^t \sum_{j=1}^N |I_j| p_{ij} \left( \int_{\Rmb^d} \sigma(X_{u_i}(s),x)\,\mu_{u_j,s}(dx) \right) dB_{u_i}(s), \quad \mu_{u_i,t}=\Lmc(X_{u_i}(t)),
\end{align*}
where $|A|$ denotes the Lebesgue measure of $A \subset I$.
Note that this is simply a finite collection of multi-type McKean--Vlasov processes.
\end{Example}

\begin{Example}
	Suppose $b(x,y)=c_1+c_2x+c_3y$ is linear, $\sigma$ is constant, and the initial distributions $\{\mu_u(0) : u \in I\}$ are Gaussian.
	Then the system \eqref{eq:limit-system-dense} is just consisting of a collection of Gaussian processes.
	Letting $m_u(t):=\Emb[X_u(t)]$ and $M_u(t):=\Emb[X_u^2(t)]$, we have
	\begin{align*}
		m_u(t) & = m_u(0) + \int_0^t \int_0^1 (c_1+c_2m_u(s)+c_3m_v(s))G(u,v) \,dv\,ds, \\
		M_u(t) & = M_u(0) + \Emb\left[\int_0^t 2X_u(s)\,dX_u(s)\right] + \sigma^2\left(\int_0^1G(u,v)\,dv\right)^2t \\
		& = M_u(0) + 2\int_0^t \int_0^1 (c_1m_u(s)+c_2M_u(s)+c_3m_u(s)m_v(s))G(u,v)\,dv\,ds \\
		& \qquad + \left(\sigma\int_0^1G(u,v)\,dv\right)^2t. 
	\end{align*}
	This is a system of coupled ordinary differential equations.
\end{Example}

\begin{Remark}
	We note that even in the above two examples, it may not be an easy work to obtain explicit forms of solutions $\Lmc(X_u(t))$.
	In the next two sections we will show the convergence of finite particle systems \eqref{eq:dense-system-intro} and \eqref{eq:sparse-system-intro} to the graphon particle system \eqref{eq:limit-system-dense}.
	On one hand, such results could be used to approximate large finite particle systems with heterogeneous interactions by a graphon particle system.
	On the other hand, for graphon particle systems that are not tractable, one may choose a suitable finite particle system (and even its Euler discretizations) to approximate the former.
\end{Remark}

\section{Mean-field particle systems on dense graphs}
\label{sec:dense-system}

In this section, we consider a sequence of $n$ interacting diffusions \eqref{eq:dense-system-intro} with the strength of interaction governed by $\xi_{ij}^n$ associated with some kernel $G_n$:
\begin{align}
	X_i^n(t) & = X_{\frac{i}{n}}(0) + \int_0^t \frac{1}{n} \sum_{j=1}^n \xi_{ij}^n b(X_i^n(s),X_j^n(s))\,ds \notag \\
	& \quad + \int_0^t \frac{1}{n} \sum_{j=1}^n \xi_{ij}^n \sigma(X_i^n(s),X_j^n(s))\,dB_{\frac{i}{n}}(s), \quad i \in \{1,\dotsc,n\}. 
	\label{eq:dense-system}
\end{align}
Here the pathwise existence and uniqueness of the solution is guaranteed by the Lipschitz property of $b$ and $\sigma$.

{We would like to consider the natural correspondence between the adjacency matrix $\{\xi_{ij}^n\}$ and a function on $I \times I$ with constant value $\xi_{ij}^n$ on each block $(\frac{i-1}{n},\frac{i}{n}] \times (\frac{j-1}{n},\frac{j}{n}]$ of side length $\frac{1}{n}$.
So we} make the following assumption on the strength of interaction $\xi_{ij}^n$ and the associated kernel $G_n$.

\begin{Condition}
	\phantomsection
	\label{cond:dense-Gn}
	\begin{enumerate}[(a)]
	\item 
		{The kernel} $G_n$ is a step graphon, 
		that is, $0 \le G_n \le 1$ and
		\begin{equation}
			\label{eq:step-graphon}
			G_n(u,v) = G_n \left( \frac{\lceil nu \rceil}{n}, \frac{\lceil nv \rceil}{n} \right), \quad \mbox{ for } (u,v) \in I \times I.
		\end{equation}	
	\item
		{The strength of interaction $\xi_{ij}^n$ is generated by the kernel $G_n$ as follows. 
		Either one of the following two cases holds for all $\{\xi_{ij}^n : i,j \in \{1,\dotsc,n\}, n \in \Nmb\}$:
		\begin{enumerate}[(b.1)]
		\item $\xi_{ij}^n=G_n(\frac{i}{n},\frac{j}{n})$;
		\item $\xi_{ij}^n=\xi_{ji}^n=$Bernoulli$(G_n(\frac{i}{n},\frac{j}{n}))$ independently for $1 \le i \le j \le n$, and independent of $\{B_u, X_u(0) : u \in I\}$.
		\end{enumerate}}
	\item
		$G_n \to G$ in the cut metric as $n \to \infty$.
	\end{enumerate}
\end{Condition}

\begin{Remark}
	\label{rmk:3.1}
	Although in this work we consider $G_n \to G$ in the cut metric, it is also possible to analyze the convergence to the limiting graphon particle system \eqref{eq:limit-system-dense} when $G_n \to G$ in the cut distance, with suitable additional assumptions, illustrated as follows.
	
	The cut distance on $\Gmc$ is defined by 
	$$\delta_\square(G_1,G_2) := \inf_{\varphi \in S_I} \|G_1-G_2^\varphi\|_\square,$$
	where $S_I$ denotes the set of all invertible measure preserving maps $I \to I$, and $G^\varphi(u,v):=G(\varphi(u),\varphi(v))$.		
	If $\delta_\square(G_n,G) \to 0$ for a sequence of step graphons $G_n$, then it follows from \cite[Theorem 11.59]{Lovasz2012large} that $\|\widetilde{G}_n-G\|_\square \to 0$, where $\widetilde{G}_n$ is a suitable relabeling of $G_n$.
	That is, there exists some permutation $\tilde{\varphi}_n$ on $\{1,\dotsc,n\}$ such that, letting $\varphi_n \in S_I$ be
	$$\varphi_n(\frac{i}{n}-u) = \frac{\tilde{\varphi}_n(i)}{n}-u, \quad 0 \le u < \frac{1}{n}, \quad i \in \{1,\dotsc,n\},$$ and $\widetilde{G}_n = G_n^{\varphi_n}$, one has $d_\square(\widetilde{G}_n,G) \to 0$.
	
	Therefore Condition \ref{cond:dense-Gn}(c) could be replaced by the following one, in terms of the cut distance with an additional assumption to guarantee that the initial conditions are matching (which holds naturally if $\{X_i^n(0)\}$ are i.i.d.):
	\begin{itemize}
	\item[(c*)] $\delta_\square(G_n,G) \to 0$ as $n \to \infty$, so that there exists some invertible measure preserving map $\varphi_n \colon I \to I$, interpreted as a relabeling of the graph $G_n$, such that $d_{\square}(G_n^{\varphi_n},G) \to 0$. Also suppose $X_i^n(0) = X_{\varphi_n^{-1}(\frac{i}{n})}(0)$.
	\end{itemize}
	One could then apply results in this work to $Y_i^n := X_{n\varphi_n(\frac{i}{n})}^n$ with the kernel $\widetilde{G}_n :=G_n^{\varphi_n}$, on noting that $Y_i^n(0) = X_{n\varphi_n(\frac{i}{n})}^n(0) = X_{\frac{i}{n}}(0)$.
	This would also imply, as one would naturally expect, the convergence of empirical measures is independent of the labeling of the particles; see e.g.\ \cite{BetCoppiniNardi2020weakly} for more discussions on this.
\end{Remark}

The following convergence holds for the system \eqref{eq:dense-system}.
The proof is given in Section \ref{sec:dense-LLN-pf}.

\begin{Theorem}
	\label{thm:dense-LLN}
	Suppose Conditions \ref{cond:standing}, \ref{cond:limit-G}(a) and \ref{cond:dense-Gn} hold.
	Then
	\begin{equation}
		\label{eq:dense-mu-cvg}
		\mu^n \to \mubar \text{ in } \Pmc(\Cmc_d) \text{ in probability}
	\end{equation}
	as $n \to \infty$,
	where
	\begin{equation*}
		\mu^n := \frac{1}{n} \sum_{i=1}^n \delta_{X_i^n}, \quad \mubar := \int_I \mu_u \, du.
	\end{equation*}
	If in addition Condition \ref{cond:limit-G}(b) holds, then
	\begin{equation}
		\label{eq:dense-X-cvg}
		\frac{1}{n} \sum_{i=1}^n \Emb \|X_i^n-X_{\frac{i}{n}}\|_{*,T}^2 \to 0
	\end{equation}
	as $n \to \infty$.
\end{Theorem}

\begin{Remark}
	\phantomsection
	\label{rmk:3.2}
	We note that the regularity assumption (Condition \ref{cond:limit-G}(b)) of $G$ is not needed to obtain the convergence in \eqref{eq:dense-mu-cvg}, but it helps obtain the $L^2$ convergence in \eqref{eq:dense-X-cvg}.
\end{Remark}

\begin{Remark}
	\phantomsection
	\label{rmk:3.3}
	We also note that the interaction $\{\xi^n_{ij}\}$ in Condition \ref{cond:dense-Gn}(b) could be either weighted or random.
	\begin{enumerate}[(a)]
	\item 
		When Condition \ref{cond:dense-Gn}(b.2) holds so that $\{\xi^n_{ij}\}$ are random, the convergence in Theorem \ref{thm:dense-LLN} (and Theorem \ref{thm:sparse-LLN}) is understood in the annealed sense, where the probability and expectation are taken with respect to all the randomness in the Brownian motions, initial states, and random $\{\xi^n_{ij}\}$.
	\item
		When Condition \ref{cond:dense-Gn}(b.1) holds so that $\{\xi^n_{ij}\}$ are deterministic, the convergence \eqref{eq:dense-mu-cvg} in Theorem \ref{thm:dense-LLN} (and \eqref{eq:sparse-mu-cvg} in Theorem \ref{thm:sparse-LLN}) could be interpreted in the quenched sense under suitable assumptions, given the realization of $\{\xi^n_{ij}\}$ possibly generated in a random manner.
		For example, suppose $\widetilde{G}_n$ is sampled from $G$ as follows: sample $n$ points $x_1^n,\dotsc,x_n^n$ i.i.d.\ from the uniform distribution on $I$ and relabel the indices so that $x_1^n \le x_2^n \le \dotsb \le x_n^n$.
		Define $\widetilde{G}_n$ as the step graphon with $\widetilde{G}_n(\frac{i}{n},\frac{j}{n})=$Bernoulli$(G(x_i^n,x_j^n))$ independently for $1 \le i \le j \le n$, and independent of $\{B_u,X_u(0):u \in I\}$.
		It then follows from \cite[Lemma 10.16]{Lovasz2012large} and a Borel--Cantelli lemma argument (see e.g.\ \cite[Remark 6]{Carmona2019stochastic} and \cite[Theorem 5 ]{PariseOzdaglar2019graphon}) that one has the following almost sure convergence in the cut distance
		$$\delta_\square(\widetilde{G}_n,G) \to 0, \quad \mbox{ as } n \to \infty.$$
		As illustrated in Remark \ref{rmk:3.1}, one can then find a relabeling $G_n$ of $\widetilde{G}_n$ such that $d_\square(G_n,G) \to 0$ almost surely, namely Condition \ref{cond:dense-Gn}(b.1) holds.
		Now suppose the initial states are i.i.d.\ so that the empirical measure is not affected by relabeling. Then one can apply \eqref{eq:dense-mu-cvg} to almost every graph realization $G_n$ to get a quenched law of large numbers, that is,
		$$\mu^n \to \mubar \text{ in } \Pmc(\Cmc_d) \text{ in } \Pmb[\cdot \mid \{G_m\}_{m \in \Nmb}] \text{ as } n \to \infty,$$
		for almost every graph realization $G_n$.
	\end{enumerate}
\end{Remark}

If the interaction $\xi_{ij}^n$ is sampled from a common graphon $G$ that is Lipschitz, one could get a uniform rate of convergence.

\begin{Condition}
	\label{cond:dense-Gn-special}
	{Suppose the strength of interaction $\xi_{ij}^n$ is generated as follows. 
	Either one of the following two cases holds for all $\{\xi_{ij}^n : i,j \in \{1,\dotsc,n\}, n \in \Nmb\}$:
	\begin{enumerate}[(i)]
	\item $\xi_{ij}^n=G(\frac{i}{n},\frac{j}{n})$;
	\item $\xi_{ij}^n=\xi_{ji}^n=$Bernoulli$(G(\frac{i}{n},\frac{j}{n}))$ independently for $1 \le i \le j \le n$, and independent of $\{B_u, X_u(0) : u \in I\}$.
	\end{enumerate}}
\end{Condition}

The proof of the following rate of convergence is given in Section \ref{sec:dense-LLN-special-pf}.
We note that the rate is consistent with that in the classic mean-field setup (in e.g.\ \cite{Sznitman1991}).

\begin{Theorem}
	\label{thm:dense-LLN-special}
	Suppose Conditions \ref{cond:standing}, \ref{cond:limit-G-special} and \ref{cond:dense-Gn-special} hold.
	Then there exists some $\kappa \in (0,\infty)$ such that
	\begin{equation}
		\label{eq:dense-special-X-cvg}
		\max_{i=1,\dotsc,n} \Emb \|X_i^n-X_{\frac{i}{n}}\|_{*,T}^2 \le \frac{\kappa}{n}, \quad \forall\, n \in \Nmb. 
	\end{equation} 
\end{Theorem}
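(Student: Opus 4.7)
The plan is to couple the prelimit and limit systems by driving each pair $(X_i^n, X_{i/n})$ with the same Brownian motion $B_{i/n}$ and the same initial condition $X_{i/n}(0)$, and then to estimate $\Emb\|X_i^n - X_{i/n}\|_{*,t}^2$ by applying Cauchy--Schwarz to the drift difference and Burkholder--Davis--Gundy (plus Doob) to the diffusion difference. This reduces the task to controlling, uniformly in $i$ and $s\le t$, the mean square of
\begin{equation*}
	D_i(s) := \frac{1}{n}\sum_{j=1}^n \xi_{ij}^n b(X_i^n(s), X_j^n(s)) - \int_I \langle b(X_{i/n}(s), \cdot), \mu_{v,s} \rangle G(i/n, v) \, dv
\end{equation*}
together with the analogous quantity obtained by replacing $b$ with $\sigma$.

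I would split $D_i(s)$ into four pieces: \textbf{(I)} the Lipschitz replacement error $\frac{1}{n}\sum_j \xi_{ij}^n [b(X_i^n,X_j^n) - b(X_{i/n},X_{j/n})]$; \textbf{(II)} the sampling fluctuation $\frac{1}{n}\sum_j (\xi_{ij}^n - G(i/n,j/n)) b(X_{i/n},X_{j/n})$, which vanishes identically in case (i) of Condition \ref{cond:dense-Gn-special}; \textbf{(III)} the empirical-law fluctuation $\frac{1}{n}\sum_j G(i/n,j/n)[b(X_{i/n},X_{j/n}) - \langle b(X_{i/n},\cdot),\mu_{j/n,s}\rangle]$; and \textbf{(IV)} the deterministic Riemann-sum error $\frac{1}{n}\sum_j G(i/n,j/n)\langle b(X_{i/n},\cdot),\mu_{j/n,s}\rangle - \int_I G(i/n,v)\langle b(X_{i/n},\cdot),\mu_{v,s}\rangle\,dv$.

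Piece \textbf{(I)} is bounded by Lipschitz continuity of $b$ together with $\xi_{ij}^n \in [0,1]$, yielding after Cauchy--Schwarz the Gronwall driver $\Emb\|X_i^n-X_{i/n}\|_{*,s}^2 + \frac{1}{n}\sum_j \Emb\|X_j^n - X_{j/n}\|_{*,s}^2$. For \textbf{(II)} in the Bernoulli case, conditioning on the $\sigma$-algebra generated by $\{B_u, X_u(0): u\in I\}$ renders the summands independent mean-zero random variables each bounded by $2\|b\|_\infty/n$, so $\Emb|\text{(II)}|^2 \le \kappa/n$. For \textbf{(III)}, conditioning instead on $X_{i/n}(s)$ and using the independence of $\{X_{j/n}(s)\}_j$ with $\Lmc(X_{j/n}(s)) = \mu_{j/n,s}$ similarly yields $\Emb|\text{(III)}|^2 \le \kappa/n$. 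For \textbf{(IV)}, Condition \ref{cond:limit-G-special} gives Lipschitz continuity of $v\mapsto G(i/n,v)$ on each block $I_k$, while Theorem \ref{thm:limit-system-property}(b) combined with Lipschitz continuity of $b$ and the duality of Remark \ref{rmk:Wass-dual} gives Lipschitz continuity of $v\mapsto \langle b(X_{i/n}(s),\cdot),\mu_{v,s}\rangle$ on each block; the standard Riemann-sum bound applied piece by piece over the finitely many blocks then yields $|\text{(IV)}|\le \kappa/n$.

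Combining these estimates together with the parallel decomposition for the $\sigma$ terms produces
\begin{equation*}
	\Emb\|X_i^n - X_{i/n}\|_{*,t}^2 \le \kappa \int_0^t \left[ \Emb\|X_i^n-X_{i/n}\|_{*,s}^2 + \frac{1}{n}\sum_{j=1}^n \Emb\|X_j^n-X_{j/n}\|_{*,s}^2 \right] ds + \frac{\kappa}{n}.
\end{equation*}
Taking $\max_{i=1,\dotsc,n}$ on both sides gives a Gronwall-type inequality whose iteration delivers the claimed $O(1/n)$ rate. The main obstacle is securing the $1/n$ rate (as opposed to $1/\sqrt{n}$) in pieces \textbf{(II)} and \textbf{(III)}: both require exploiting a clean independence structure after the right conditioning, which is available here thanks to the explicit independence assumptions in Condition \ref{cond:dense-Gn-special}(ii) and the product-space construction of the graphon particle system.
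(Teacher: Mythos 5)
Your proposal is correct and follows essentially the same strategy as the paper's proof: couple each pair $(X_i^n, X_{i/n})$ through the shared Brownian motion and initial condition, decompose the drift/diffusion mismatch by adding and subtracting, bound the Lipschitz-replacement term to feed Gronwall, bound a fluctuation term by $\kappa/n$ via conditional independence, bound the Riemann-sum term using Condition \ref{cond:limit-G-special}, Theorem \ref{thm:limit-system-property}(b), and Remark \ref{rmk:Wass-dual}, and close with Gronwall. The only cosmetic difference is that you split the paper's single middle term $\Tmctil_s^{n,2}$ into your (II) (edge-sampling fluctuation, nontrivial only in the Bernoulli case) and (III) (empirical-law fluctuation), and handle them by two separate conditioning steps; the paper treats them jointly by a single ``weak LLN type argument'' exploiting that, conditioned on $X_{i/n}(s)$, the cross terms for $j\neq k$, $j,k\neq i$, have vanishing expectation. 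Your finer split is arguably more transparent but mathematically equivalent, and the $j=i$ diagonal term you implicitly gloss over is harmless since it contributes only $O(1/n^2)$ after squaring.
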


\section{Mean-field particle systems on not-so-dense graphs}
\label{sec:sparse-system}

In this section we consider a sequence of $n$ interacting diffusions \eqref{eq:sparse-system-intro} with the strength of interaction governed by $\xi_{ij}^n$ associated with some kernel $G_n$ in a not-so-dense manner:
\begin{align}
	X_i^n(t) & = X_{\frac{i}{n}}(0) + \int_0^t \frac{1}{n\beta_n} \sum_{j=1}^n \xi_{ij}^n b(X_i^n(s),X_j^n(s))\,ds + \int_0^t \sigma(X_i^n(s))\,dB_{\frac{i}{n}}(s), \quad i \in \{1,\dotsc,n\}. 
	\label{eq:sparse-system}
\end{align}

We make the following assumptions on the initial states and system coefficients.
The forms and assumptions on $b$ and $\sigma$ are taken for technical requirements, in order to apply certain Girsanov's arguments.

\begin{Condition}
	\phantomsection
	\label{cond:standing-sparse}
	\begin{enumerate}[(a)]
	\item The map $I \ni u \mapsto \mu_u(0)=\Lmc(X_u(0)) \in \Pmc(\Rmb^d)$ is measurable. Moreover,
	\begin{equation*}
		\sup_{u \in I} \Emb |X_u(0)|^2 < \infty.
	\end{equation*}
	\item The function $b$ is bounded and Lipschitz.
	\item The function $\sigma$ is bounded, Lipschitz and invertible with bounded inverse $1/\sigma$.
	\item $\beta_n \in (0,1]$ and $\lim_{n \to \infty} n\beta_n = \infty$.
	\end{enumerate}
\end{Condition}

We make the following assumptions on the strength of interaction $\xi_{ij}^n$ and the associated kernel $G_n$.

\begin{Condition}
	\phantomsection
	\label{cond:sparse-Gn}
	\begin{enumerate}[(a)]
	\item 
		$G_n$ is a step graphon, that is, \eqref{eq:step-graphon} holds:
		$G_n(u,v) = G_n(\frac{\lceil nu \rceil}{n},\frac{\lceil nv \rceil}{n})$ for $(u,v) \in I \times I$.
	\item
		$\xi_{ij}^n=\xi_{ji}^n=$Bernoulli$(\beta_n G_n(\frac{i}{n},\frac{j}{n}))$ independently for $1 \le i \le j \le n$, and independent of $\{B_u, X_u(0) : u \in I\}$.
	\item
		$G_n \to G$ in the cut metric as $n \to \infty$.
	\end{enumerate}
\end{Condition}

\begin{Remark}
	\label{rmk:sparse}
	\begin{enumerate}[(a)]
	\item The interaction in \eqref{eq:sparse-system} is locally mean-field, where the strength of interaction between particles $i$ and $j$ is $\xi_{ij}^n$ scaled down by $n\beta_n$, the order of number of neighbors of $i$ or $j$. 
	\item Condition \ref{cond:sparse-Gn}(b) is also known as bond percolation, which has been studied for converging graph sequences in \cite{bollobas2010percolation}. 
	If we take $\lim_{n \to \infty} \beta_n = 0$ then graphs that we obtain are not-so-dense, in that for a graph $\mathbb{G}_n$ with an order of $n^2$ edges, the percolated graph will have approximately an order of $n^2\beta_n$ edges. 
	Therefore $\beta_n$ can be interpreted as the global sparsity parameter.
	\end{enumerate}
\end{Remark}

The limiting graphon particle system is given by
\begin{align}
	X_u(t) & = X_u(0) + \int_0^t \int_I \int_{\Rmb^d} b(X_u(s),x)G(u,v)\,\mu_{v,s}(dx)\,dv\,ds \notag \\
	& \quad + \int_0^t \sigma(X_u(s))\,dB_u(s), \quad \mu_{u,t}=\Lmc(X_u(t)), \quad u \in I.
	\label{eq:limit-system-sparse}
\end{align}
This is a special case of \eqref{eq:limit-system-dense} and hence Proposition \ref{prop:limit-existence-uniqueness} and Theorem \ref{thm:limit-system-property} (with Remark \ref{rmk:epsilon-1}) still hold.

The following theorem gives a LLN for the system \eqref{eq:sparse-system}.
The proof is given in Section \ref{sec:sparse-LLN-pf}.

\begin{Theorem}
	\label{thm:sparse-LLN}
	Suppose Conditions \ref{cond:limit-G}(a), \ref{cond:standing-sparse} and \ref{cond:sparse-Gn} hold.
	Then
	\begin{equation}
		\label{eq:sparse-mu-cvg}
		\mu^n \to \mubar \text{ in } \Pmc(\Cmc_d) \text{ in probability}
	\end{equation}
	as $n \to \infty$, where
	\begin{equation*}
		\mu^n := \frac{1}{n} \sum_{i=1}^n \delta_{X_i^n}, \quad \mubar := \int_I \mu_u \, du.
	\end{equation*}
	If in addition Condition \ref{cond:limit-G}(b) holds, then
	\begin{equation}
		\label{eq:sparse-X-cvg}
		\frac{1}{n} \sum_{i=1}^n \Emb \|X_i^n-X_{\frac{i}{n}}\|_{*,T}^2 \to 0
	\end{equation}
	as $n \to \infty$.
\end{Theorem}

If the interaction $\xi_{ij}^n$ is sampled from a common graphon $G$ that is Lipschitz, one could get a precise rate of convergence.

\begin{Condition}
	\label{cond:sparse-Gn-special}
	$\xi_{ij}^n=\xi_{ji}^n=$Bernoulli$(\beta_nG(\frac{i}{n},\frac{j}{n}))$ independently for $1 \le i \le j \le n$, and independent of $\{B_u, X_u(0) : u \in I\}$.
\end{Condition}

The proof of the following rate of convergence is given in Section \ref{sec:sparse-LLN-pf}.

\begin{Theorem}
	\label{thm:sparse-LLN-special}
	Suppose Conditions \ref{cond:limit-G-special}, \ref{cond:standing-sparse} and \ref{cond:sparse-Gn-special} hold.
		Then for each $q \in (1,\infty)$ there exists some $\kappa(q) \in (0,\infty)$ such that
		\begin{equation}
			\label{eq:sparse-special-X-cvg}
			\frac{1}{n}\sum_{i=1}^n \Emb \|X_i^n-X_{\frac{i}{n}}\|_{*,T}^2 \le \frac{\kappa(q)}{(n\beta_n)^{1/q}}, \quad \forall\, n \in \Nmb.
		\end{equation}
\end{Theorem}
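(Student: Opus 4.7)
The plan is to set $\Delta_i^n(t) := \Emb\|X_i^n - X_{\frac{i}{n}}\|_{*,t}^2$ and derive a discrete Gronwall-type inequality that yields the claimed rate. I will first apply Burkholder--Davis--Gundy together with the Lipschitz property of $\sigma$ to bound the diffusion contribution to $X_i^n(t)-X_{\frac{i}{n}}(t)$ by $\kappa\int_0^t \Delta_i^n(s)\,ds$, which reduces the task to estimating the drift defect
\begin{equation*}
E_i^n(s) := \Emb\left|\frac{1}{n\beta_n}\sum_{j=1}^n \xi_{ij}^n b(X_i^n(s),X_j^n(s)) - \int_I\!\!\int_{\Rd} b(X_{\frac{i}{n}}(s),x)\, G(\tfrac{i}{n},v)\, \mu_{v,s}(dx)\,dv \right|^2
\end{equation*}
uniformly in $i\in\{1,\dots,n\}$ and $s\in[0,T]$.

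Next I decompose the drift into four pieces by inserting intermediate quantities: $T_1 = \frac{1}{n\beta_n}\sum_j \xi_{ij}^n[b(X_i^n,X_j^n)-b(X_{i/n},X_{j/n})]$; $T_2 = \frac{1}{n\beta_n}\sum_j (\xi_{ij}^n - \beta_n G(\tfrac{i}{n},\tfrac{j}{n}))\, b(X_{i/n},X_{j/n})$; $T_3 = \frac{1}{n}\sum_j G(\tfrac{i}{n},\tfrac{j}{n})\bigl[b(X_{i/n},X_{j/n}) - \int b(X_{i/n},x)\mu_{j/n,s}(dx)\bigr]$; and $T_4 = \frac{1}{n}\sum_j G(\tfrac{i}{n},\tfrac{j}{n})\int b(X_{i/n},x)\mu_{j/n,s}(dx) - \int_I G(\tfrac{i}{n},v)\int b(X_{i/n},x)\mu_{v,s}(dx)\,dv$. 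For $T_4$ I will exploit the blockwise Lipschitz property of $G$ from Condition \ref{cond:limit-G-special} together with the blockwise Lipschitz continuity of $v\mapsto\mu_v$ in the $W_{2,T}$ metric provided by Theorem \ref{thm:limit-system-property}(b); these are Riemann sum errors on each subinterval $I_l$ and yield $|T_4|\le \kappa/n$. For $T_3$ the independence of the graphon-system particles $\{X_{j/n}\}_j$ (given $X_{i/n}$) makes the summands orthogonal, and the boundedness of $b$ gives $\Emb|T_3|^2 \le \kappa/n$.

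The remaining terms $T_1$ and $T_2$ interact with the unbounded weights $\xi_{ij}^n/\beta_n$. For $T_2$, the edge variables $\{\xi_{ij}^n\}_j$ are independent of the limit particles $\{X_{i/n},X_{j/n}\}$, so a direct variance calculation gives $\Emb|T_2|^2 \le \|b\|_\infty^2/(n\beta_n)$. The delicate term is $T_1$: after the Lipschitz expansion $|T_1|\le \kappa\frac{1}{n\beta_n}\sum_j \xi_{ij}^n(|X_i^n - X_{i/n}| + |X_j^n - X_{j/n}|)$, the summand has $\xi_{ij}^n$ correlated with $X_j^n$ through the SDE. The main obstacle is therefore controlling the expectation of $\bigl(\frac{1}{n\beta_n}\sum_j \xi_{ij}^n|X_j^n - X_{j/n}|\bigr)^2$; here I plan to invoke Lemma \ref{lem:sparse-key-estimate-2}, which through a Girsanov change of measure that removes the $ij$- (respectively the $ij$- and $ik$-) edges from the drift of $X_j^n$ (respectively of the pair $X_j^n,X_k^n$) shows that cross-moments of the form $\Emb[(\xi_{ij}^n/\beta_n)(\xi_{ik}^n/\beta_n)|X_j^n - X_{j/n}||X_k^n - X_{k/n}|]$ are approximately $G(\tfrac{i}{n},\tfrac{j}{n})G(\tfrac{i}{n},\tfrac{k}{n})\Emb|X_j^n-X_{j/n}|\,\Emb|X_k^n - X_{k/n}|$, up to errors whose size is dictated by the $L^{q'}$-norm of the Radon--Nikodym density; H\"older's inequality with conjugate exponents $q$ and $q'$ then produces the $(n\beta_n)^{-1/q}$ factor for any $q>1$. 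The boundedness of $\sigma^{-1}$ is essential here, as it makes the Girsanov exponents Lipschitz functionals of the drift perturbations caused by removing a single edge.

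Assembling the four pieces yields
\begin{equation*}
\Delta_i^n(t) \le \kappa\int_0^t \Delta_i^n(s)\,ds + \kappa\int_0^t \frac{1}{n}\sum_{j=1}^n \Delta_j^n(s)\,ds + \frac{\kappa(q)}{(n\beta_n)^{1/q}},
\end{equation*}
and averaging over $i$ followed by Gronwall's lemma delivers \eqref{eq:sparse-special-X-cvg}. The hardest step will be implementing the change-of-measure argument of Lemma \ref{lem:sparse-key-estimate-2} in a way compatible with the quadratic cross terms arising from $|T_1|^2$, while keeping the Radon--Nikodym densities' $L^{q'}$ moments uniformly bounded in $n$; this is where the assumption $\liminf_{n\to\infty} n\beta_n > 0$ is used to prevent the densities from degenerating.
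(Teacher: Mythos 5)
Your proposal is essentially the same as the paper's proof: both reduce the theorem to the drift-defect estimate, decompose it into Lipschitz, centering/variance, and Riemann-sum pieces, invoke the Girsanov change-of-measure estimates of Lemma \ref{lem:sparse-key-estimate-2} for the cross-terms involving $\xi_{ij}^n/\beta_n$ coupled with $|X_j^n-X_{j/n}|$, bound the Riemann-sum error using Condition \ref{cond:limit-G-special} and Theorem \ref{thm:limit-system-property}(b), and close with Gronwall. The only differences are cosmetic rearrangements of the intermediate terms (your $T_2+T_3$ is the paper's $\Rmc_s^{n,3}$ kept as a single weak-LLN term), and your description of what Lemma \ref{lem:sparse-key-estimate-2} yields is slightly imprecise (the bound replaces products of first moments by sums of second moments, via Young's inequality, rather than factoring into $\Emb|X_j^n-X_{j/n}|\,\Emb|X_k^n-X_{k/n}|$), but these do not affect the argument.
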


\section{Proofs for Section \ref{sec:limit-system}}
\label{sec:limit-system-pf}

In this section we prove Proposition \ref{prop:limit-existence-uniqueness} and Theorem \ref{thm:limit-system-property}.

\subsection{Proof of Proposition \ref{prop:limit-existence-uniqueness}}
\label{sec:limit-existence-uniqueness-pf}

Define the map $\Mmc \ni \mu \mapsto \Phi(\mu) \in [\Pmc(\Cmc_d)]^I$ by $\Phi(\mu) := (\Lmc(X_u^\mu) : u \in I)$, where $X_u^\mu$ is the solution of
	\begin{align}
		X_u^\mu(t) & = X_u(0) + \int_0^t \int_I \int_{\Rmb^d} b(X_u^\mu(s),x)G(u,v)\,\mu_{v,s}(dx)\,dv\,ds \notag \\
		& \quad + \int_0^t \int_I \int_{\Rmb^d} \sigma(X_u^\mu(s),x)G(u,v)\,\mu_{v,s}(dx)\,dv\,dB_u(s). \label{eq:X-mu}
	\end{align}
	Note that the pathwise uniqueness of $\{X_u^\mu : u \in I\}$ is guaranteed by the Lipschitz properties of $b$ and $\sigma$ (see e.g.\ \cite[Theorem 5.2.5]{KaratzasShreve1991brownian}).
	We claim that 
	\begin{equation}
		\label{eq:well-pose-claim}
		\mbox{the pathwise existence of } \{X_u^\mu : u \in I\} \mbox{ holds and } \Phi(\mu) \in \Mmc \mbox{ for } \mu \in \Mmc.
	\end{equation}
	The proof of \eqref{eq:well-pose-claim} is deferred to the end.

	Next we claim that
	\begin{equation}
		\label{eq:graphon-Wass-claim}
		W_{2,t}^\Mmc(\Phi(\mu),\Phi(\nu)) \le \kappa \int_0^t W_{2,s}^\Mmc(\mu,\nu)\,ds, \quad \mu,\nu \in \Mmc.
	\end{equation}
	Note that the right hand side is well-defined since the integrand $W_{2,s}^\Mmc(\mu,\nu)$ is increasing and hence measurable in $s \in [0,T]$.
	To show \eqref{eq:graphon-Wass-claim}, consider the coupling
	\begin{align*}
		X_u^\mu(t) & = X_u(0) + \int_0^t \int_I \int_{\Rmb^d} b(X_u^\mu(s),x)G(u,v)\,\mu_{v,s}(dx)\,dv\,ds \\
		& \quad + \int_0^t \int_I \int_{\Rmb^d} \sigma(X_u^\mu(s),x)G(u,v)\,\mu_{v,s}(dx)\,dv\,dB_u(s), \\
		X_u^\nu(t) & = X_u(0) + \int_0^t \int_I \int_{\Rmb^d} b(X_u^\nu(s),x)G(u,v)\,\nu_{v,s}(dx)\,dv\,ds \\
		& \quad + \int_0^t \int_I \int_{\Rmb^d} \sigma(X_u^\nu(s),x)G(u,v)\,\nu_{v,s}(dx)\,dv\,dB_u(s).
	\end{align*}	
	It then follows from Holder's inequality and the Burkholder-Davis-Gundy inequality that
	\begin{align*}
		& \Emb \|X_u^\mu-X_u^\nu\|_{*,t}^2 \\
		& \le \kappa \Emb \int_0^t \int_I \left| \int_{\Rmb^d} b(X_u^\mu(s),x)G(u,v)\,\mu_{v,s}(dx) - \int_{\Rmb^d} b(X_u^\nu(s),x)G(u,v)\,\nu_{v,s}(dx) \right|^2 dv\,ds \\
		& \quad + \kappa \Emb \int_0^t \int_I \left| \int_{\Rmb^d} \sigma(X_u^\mu(s),x)G(u,v)\,\mu_{v,s}(dx) - \int_{\Rmb^d} \sigma(X_u^\nu(s),x)G(u,v)\,\nu_{v,s}(dx) \right|^2 dv\,ds.
	\end{align*}
	By adding and subtracting terms, we have
	\begin{align*}
		& \left| \int_{\Rmb^d} b(X_u^\mu(s),x)G(u,v)\,\mu_{v,s}(dx) - \int_{\Rmb^d} b(X_u^\nu(s),x)G(u,v)\,\nu_{v,s}(dx) \right|^2 \\
		& \le 2\left| \int_{\Rmb^d} [b(X_u^\mu(s),x)-b(X_u^\nu(s),x)]G(u,v)\,\mu_{v,s}(dx) \right|^2 \\
		& \quad + 2\left| \int_{\Rmb^d} b(X_u^\nu(s),x)G(u,v)\,[\mu_{v,s}-\nu_{v,s}](dx) \right|^2 \\
		& \le \kappa |X_u^\mu(s)-X_u^\nu(s)|^2 + \kappa [W_{2,s}^\Mmc(\mu,\nu)]^2,
	\end{align*}	
	where the last line uses the Lipschitz property of $b$ and Remark \ref{rmk:Wass-dual}.
	The same estimate holds when $b$ is replaced by $\sigma$ in the last display. 
	It then follows from Gronwall's inequality that
	\begin{equation*}
		\Emb \|X_u^\mu-X_u^\nu\|_{*,t}^2 \le \kappa \int_0^t [W_{2,s}^\Mmc(\mu,\nu)]^2\,ds.
	\end{equation*}
	Therefore the claim \eqref{eq:graphon-Wass-claim} holds.
	
	Using the claim \eqref{eq:graphon-Wass-claim}, we can immediately get pathwise uniqueness for the solution of \eqref{eq:limit-system-dense}.
	The pathwise existence also follows from \eqref{eq:graphon-Wass-claim} and a standard contraction argument (see e.g.\ \cite[Section I.1]{Sznitman1991}).
	To be precise, taking $\nu=(\Lmc(Y_u) : u \in I)$ where $Y_u(t) \equiv X_u(0)$ for $u \in I$ and $t \in [0,T]$, iterating \eqref{eq:graphon-Wass-claim} gives
	$$W_{2,T}^\Mmc(\Phi^{k+1}(\nu),\Phi^k(\nu)) \le \kappa^k \frac{T^k}{k!} W_{2,T}^\Mmc(\Phi(\nu),\nu), \quad k \in \Nmb.$$
	Using \eqref{eq:initial-moment}, the Lipschitz properties of $b$ and $\sigma$, and the fact that $\Phi(\nu)\in\Mmc$, one clearly has $W_{2,T}^\Mmc(\Phi(\nu),\nu) < \infty$.
	Therefore $\Phi^{k}(\nu)$ is a Cauchy sequence, and hence there exists some $\mu=(\mu_u)_{u \in I} \in [\Pmc(\Cmc_d)]^I$ such that $\lim_{k \to \infty} W_{2,T}^\Mmc(\Phi^k(\nu),\mu) = 0$ and $\sup_{u \in I} \int_{\Cmc_d} \|x\|_{*,T}^2 \,\mu_u(dx)<\infty$.
	This gives the existence in law of the solution of \eqref{eq:limit-system-dense}, which together with the pathwise uniqueness gives the pathwise existence of the solution of \eqref{eq:limit-system-dense}.
	{Using the claim \eqref{eq:well-pose-claim}, we have $\Phi^{k}(\nu) \in \Mmc$ and hence $u \mapsto [\Phi^{k}(\nu)]_u$ is measurable by the definition of $\Mmc$}.
	Since $\Pmc(\Cmc_d)$ is Polish and $\lim_{k \to \infty} W_{2,T}^\Mmc(\Phi^{k}(\nu),\mu)=0$, we have the measurability of $I \ni u \mapsto \mu_u \in \Pmc(\Cmc_d)$ (cf.\ \cite[Theorem 4.2.2]{Dudley2018real}).
	Using the Lipschitz properties of $b$ and $\sigma$, one has $\sup_{u \in I} \Emb\|X_u\|_{*,T}^{2+\varepsilon}<\infty$. 

	Finally we verify the claim \eqref{eq:well-pose-claim}.
	Fix $\mu \in \Mmc$.
	Let $\Xtil^0_u(t) \equiv X_u(0)$ for $t \in [0,T]$ and $u \in I$.
	For $n \in \Nmb$, let
	\begin{align}
		\Xtil^n_u(t) & = \Xtil^{n-1}_u(0) + \int_0^t \int_I \int_{\Rmb^d} b(\Xtil^{n-1}_u(s),x)G(u,v)\,\mu_{v,s}(dx)\,dv\,ds \notag \\
		& \quad + \int_0^t \int_I \int_{\Rmb^d} \sigma(\Xtil^{n-1}_u(s),x)G(u,v)\,\mu_{v,s}(dx)\,dv\,dB_u(s). \label{eq:Xtiln}
	\end{align}
	Since $\mu \in \Mmc$, it follows that $\Xtil_u^n$ and integrals in \eqref{eq:Xtiln} are well-defined for all $u \in I$ and $n \in \Nmb$.
	We will prove by induction that 
	\begin{equation}
		\label{eq:claim-measurable}
		I \ni u \mapsto \Lmc(\Xtil^n_u, B_u) \in \Pmc(\Cmc_d \times \Cmc_d) \mbox{ is measurable for each } n =0,1,\dotsc
	\end{equation}
	By construction and Condition \ref{cond:standing}(a), \eqref{eq:claim-measurable} holds for $n=0$.
	Next suppose \eqref{eq:claim-measurable} holds up to $k-1$ for some $k \in \Nmb$.
	To complete the proof of \eqref{eq:claim-measurable}, it suffices 
	to show that
	$$I \ni u \mapsto \Lmc(\Xtil^k_u(t_1),B_u(t_1),\dotsc,\Xtil^k_u(t_m),B_u(t_m)) \in \Pmc((\Rmb^d \times \Rmb^d)^m)$$ 
	is measurable for all $0 \le t_1 \le \dotsb \le t_m \le T$ and $m \in \Nmb$. 
	It then suffices to show that
	$$I \ni u \mapsto \Emb\left[\prod_{i=1}^m \left( f_i(\Xtil^k_u(t_i))g_i(B_u(t_i)) \right)\right] \in \Rmb$$
	is measurable for all $0 \le t_1 \le \dotsb \le t_m \le T$, $m \in \Nmb$ and bounded and continuous functions $\{f_i,g_i : i=1,\dotsc,m\}$ on $\Rmb^d$. 
	Now consider the following auxiliary processes
	\begin{align*}
		\Xtil^{k,\delta}_u(t) & = \Xtil^{k-1}_u(0) + \int_0^t \int_I \int_{\Rmb^d} b(\Xtil^{k-1}_u(\lfl \frac{s}{\delta} \rfl \delta),x)G(u,v)\,\mu_{v,\lfl \frac{s}{\delta} \rfl \delta}(dx)\,dv\,ds \\
		& \quad + \int_0^t \int_I \int_{\Rmb^d} \sigma(\Xtil^{k-1}_u(\lfl \frac{s}{\delta} \rfl \delta),x)G(u,v)\,\mu_{v,\lfl \frac{s}{\delta} \rfl \delta}(dx)\,dv\,dB_u(s),
	\end{align*}		
	where $\delta \in (0,1)$.
	Clearly, $\Xtil^{k,\delta}_u(t)$ converges to $\Xtil^{k}_u(t)$ in probability as $\delta \to 0$ for each $u \in I$.
	So it suffices to prove that
	$$I \ni u \mapsto \Emb\left[\prod_{i=1}^m \left( f_i(\Xtil^{k,\delta}_u(t_i))g_i(B_u(t_i)) \right)\right] \in \Rmb$$
	is measurable for all $0 \le t_1 \le \dotsb \le t_m \le T$, $m \in \Nmb$ and bounded and continuous functions $\{f_i,g_i : i=1,\dotsc,m\}$ on $\Rmb^d$.	
	Fix $t \in [0,T]$.
	Since \eqref{eq:claim-measurable} holds for $k-1$, it further suffices to show that
	$$\Xtil^{k,\delta}_u(t) = h(u,\Xtil^{k-1}_u,B_u)$$
	for some measurable function $h \colon I \times \Cmc_d \times \Cmc_d \to \Rmb$.	
	{Noting that $\Xtil^{k,\delta}_u$ is in fact a finite sum of terms depending on $\{\Xtil^{k-1}_u(0), \Xtil^{k-1}_u(\delta), \dotsc, \Xtil^{k-1}_u(\lfl \frac{t}{\delta} \rfl \delta)\}$ and $\{B_u(0), B_u(\delta), \dotsc, B_u(\lfl \frac{t}{\delta} \rfl \delta)\}$ continuously}, we have that $h(u,\cdot,\cdot)$ is continuous on $\Cmc_d \times \Cmc_d$ for each $u \in I$, and that $h(\cdot,x,w)$ is measurable on $I$ for each $(x,w) \in \Cmc_d \times \Cmc_d$.
	Therefore $h$ is measurable and this verifies \eqref{eq:claim-measurable} by induction.
	Using the Lipschitz Properties of $b$ and $\sigma$, from \eqref{eq:Xtiln} we can get
	\begin{equation*}
		\sup_{u \in I} \Emb \|\Xtil^{n+1}_u-\Xtil^n_u\|_{*,t}^2 \le \kappa \int_0^t \sup_{u \in I} \Emb \|\Xtil^n_u-\Xtil^{n-1}_u\|_{*,s}^2 \,ds. 
	\end{equation*}
	Therefore $\{\Xtil^n_u : n \in \Nmb\}$ is Cauchy and converges uniformly in $u \in I$ in probability to some $X_u^\mu$ that satisfies \eqref{eq:X-mu}.
	Since $\Pmc(\Cmc_d)$ is Polish, the measurability of $u \mapsto \Lmc(\Xtil^n_u)$ then guarantees that $I \ni u \mapsto \Lmc(X_u^\mu) \in \Pmc(\Cmc_d)$ is measurable (cf.\ \cite[Theorem 4.2.2]{Dudley2018real}).
	Also noting that $\sup_{n \in \Nmb} \sup_{u \in I} \Emb \|\Xtil^n_u\|_{*,T}^2 < \infty$, we see that	
	$\Phi$ is actually a well-defined map from $\Mmc$ to $\Mmc$.
	This verifies \eqref{eq:well-pose-claim} and completes the proof of Proposition \ref{prop:limit-existence-uniqueness}.
	\qed

From the proof of the claim \eqref{eq:well-pose-claim} we can also get the following joint measurability, which will be used in the proof of Theorem \ref{thm:limit-system-property}. 

\begin{Remark}
	\label{rmk:joint-measurable}
	Given graphons $G_i$ and measures $\mu^{G_i}$, $i=1,2$, let $\Xtil^{G_i,0}_u(t) \equiv X_u(0)$ for $t \in [0,T]$ and $u \in I$.
	For $n \in \Nmb$, let
	\begin{align*}
		\Xtil^{G_i,n}_u(t) & = \Xtil^{G_i,n-1}_u(0) + \int_0^t \int_I \int_{\Rmb^d} b(\Xtil^{G_i,n-1}_u(s),x)G_i(u,v)\,\mu^{G_i}_{v,s}(dx)\,dv\,ds \\
		& \quad + \int_0^t \int_I \int_{\Rmb^d} \sigma(\Xtil^{G_i,n-1}_u(s),x)G_i(u,v)\,\mu^{G_i}_{v,s}(dx)\,dv\,dB_u(s).
	\end{align*}
	A similar proof to that of \eqref{eq:claim-measurable} shows that $I \ni u \mapsto \Lmc(\Xtil^{G_1,n}_u, \Xtil^{G_2,n}_u, B_u) \in \Pmc(\Cmc_d \times \Cmc_d \times \Cmc_d) \mbox{ is measurable for each } n =0,1,\dotsc$.
	It then follows that $\{(\Xtil^{G_1,n}_u, \Xtil^{G_2,n}_u): n \in \Nmb\}$ is Cauchy and converges uniformly in $u \in I$ in probability to some $(X_u^{G_1,\mu^{G_1}}, X_u^{G_2,\mu^{G_2}})$, where $X_u^{G_i,\mu^{G_i}}$ satisfies \eqref{eq:X-mu} with $G$ and $\mu$ replaced by $G_i$ and $\mu^{G_i}$.
	Therefore we have the joint measurability of $I \ni u \mapsto \Lmc(X_u^{G_1,\mu^{G_1}}, X_u^{G_2,\mu^{G_2}}) \in \Pmc(\Cmc_d \times \Cmc_d)$.
\end{Remark}

\subsection{Proof of Theorem \ref{thm:limit-system-property}}
\label{sec:limit-system-property-pf}

	(a) (b) Fix $u_1,u_2 \in I$. 
	Consider the following diffusions:
	\begin{align*}
		\Xtil_{u_1}(t) & = \Xtil_{u_1}(0) + \int_0^t \int_I \int_{\Rmb^d} b(\Xtil_{u_1}(s),x)G(u_1,v)\,\mu_{v,s}(dx)\,dv\,ds \\
		& \quad + \int_0^t \int_I \int_{\Rmb^d} \sigma(\Xtil_{u_1}(s),x)G(u_1,v)\,\mu_{v,s}(dx)\,dv\,dB(s), \\
		\Xtil_{u_2}(t) & = \Xtil_{u_2}(0) + \int_0^t \int_I \int_{\Rmb^d} b(\Xtil_{u_2}(s),x)G(u_2,v)\,\mu_{v,s}(dx)\,dv\,ds \\
		& \quad + \int_0^t \int_I \int_{\Rmb^d} \sigma(\Xtil_{u_2}(s),x)G(u_2,v)\,\mu_{v,s}(dx)\,dv\,dB(s).		
	\end{align*}
	Here $B$ is a $d$-dimensional Brownian motion independent of $\{\Xtil_{u_1}(0), \Xtil_{u_2}(0)\}$, $\Lmc(\Xtil_{u_1}(0)) = \mu_{u_1}(0)$, $\Lmc(\Xtil_{u_2}(0)) = \mu_{u_2}(0)$, but $\Xtil_{u_1}(0)$ and $\Xtil_{u_2}(0)$ may not be independent. 
	From Proposition \ref{prop:limit-existence-uniqueness} we have $\Lmc(\Xtil_{u_1}) = \mu_{u_1}$ and $\Lmc(\Xtil_{u_2}) = \mu_{u_2}$.
	Also note that
	\begin{align*}
		& \Emb \|\Xtil_{u_1}-\Xtil_{u_2}\|_{*,t}^2 \\
		& \le \kappa \Emb |\Xtil_{u_1}(0)-\Xtil_{u_2}(0)|^2 \\
		& \quad + \kappa\Emb \int_0^t \int_I \int_{\Rmb^d}  \left| b(\Xtil_{u_1}(s),x)G(u_1,v) - b(\Xtil_{u_2}(s),x)G(u_2,v) \right|^2 \mu_{v,s}(dx)\,dv\,ds \\
		& \quad + \kappa \Emb \int_0^t \int_I \int_{\Rmb^d}  \left| \sigma(\Xtil_{u_1}(s),x)G(u_1,v) - \sigma(\Xtil_{u_2}(s),x)G(u_2,v) \right|^2 \mu_{v,s}(dx)\,dv\,ds \\
		& \le \kappa \Emb |\Xtil_{u_1}(0)-\Xtil_{u_2}(0)|^2 + \kappa \Emb \int_0^t |\Xtil_{u_1}(s)-\Xtil_{u_2}(s)|^2\,ds + \kappa \int_I |G(u_1,v) - G(u_2,v)|^2\,dv,
	\end{align*}
	where the last line follows on adding and subtracting terms, using the Lipschitz properties of $b$ and $\sigma$, and uniformly finite second moment of $\mu_{u}$.
	It then follows from Gronwall's inequality that 
	\begin{equation*}
		[W_{2,T}(\mu_{u_1}, \mu_{u_2})]^2 \le \Emb \|\Xtil_{u_1}-\Xtil_{u_2}\|_{*,T}^2 \le \kappa \Emb |\Xtil_{u_1}(0)-\Xtil_{u_2}(0)|^2 + \kappa \int_I |G(u_1,v) - G(u_2,v)|^2\,dv.
	\end{equation*}	
	Taking the infimum over random variables $\Xtil_{u_1}(0)$ and $\Xtil_{u_2}(0)$ such that $\Lmc(\Xtil_{u_1}(0)) = \mu_{u_1}(0)$ and $\Lmc(\Xtil_{u_2}(0)) = \mu_{u_2}(0)$, we have
	\begin{equation*}
		[W_{2,T}(\mu_{u_1}, \mu_{u_2})]^2 \le \kappa [W_2(\mu_{u_1}(0),\mu_{u_2}(0))]^2 + \kappa \int_I |G(u_1,v) - G(u_2,v)|^2\,dv.
	\end{equation*}
	Part (a) and Part (b) then follow from Condition \ref{cond:limit-G} and Condition \ref{cond:limit-G-special}, respectively.
	
	(c) Fix $G_n \to G$ in the cut metric as $n \to \infty$.
	Let $X^{G_n}, \mu^{G_n}$ (resp.\ $X^G,\mu^G$) be the solution of \eqref{eq:limit-system-dense} associated with the graphon $G_n$ (resp.\ $G$).	
	Fix $t \in [0,T]$.
	Using Remark \ref{rmk:joint-measurable}, the left hand side below is well-defined and we have
	\begin{align}
		\int_I \Emb \|X^{G_n}_u-X^G_u\|_{*,t}^2\,du 
		& \le \kappa \int_0^t \int_I \Emb \left| \int_I \int_{\Rmb^d} b(X_u^{G_n}(s),x)G_n(u,v)\,\mu^{G_n}_{v,s}(dx)\,dv \right. \notag\\
		& \qquad \left. - \int_I \int_{\Rmb^d} b(X_u^G(s),x)G(u,v)\,\mu^G_{v,s}(dx)\,dv \right|^2 du\,ds \notag \\
		& \quad + \kappa \int_0^t \int_I \Emb \left| \int_I \int_{\Rmb^d} \sigma(X_u^{G_n}(s),x)G_n(u,v)\,\mu^{G_n}_{v,s}(dx)\,dv \right. \notag \\
		& \qquad \left. - \int_I \int_{\Rmb^d} \sigma(X_u^G(s),x)G(u,v)\,\mu^G_{v,s}(dx)\,dv \right|^2 du\,ds. \label{eq:graphon1}
	\end{align}
	We will analyze the first integrand above for fixed $s \in [0,t]$, and the analysis for $\sigma$ is similar.
	By adding and subtracting terms, we have
	\begin{align}
		& \int_I \Emb \left| \int_I \int_{\Rmb^d} b(X_u^{G_n}(s),x)G_n(u,v)\,\mu^{G_n}_{v,s}(dx)\,dv - \int_I \int_{\Rmb^d} b(X_u^G(s),x)G(u,v)\,\mu^G_{v,s}(dx)\,dv \right|^2 du \notag \\
		& \le \kappa \int_I \int_I \Emb \left| \int_{\Rmb^d} [b(X_u^{G_n}(s),x)-b(X_u^G(s),x)]G_n(u,v)\,\mu^{G_n}_{v,s}(dx) \right|^2 dv\,du \notag \\
		& \quad + \kappa \int_I \int_I \Emb \left| \int_{\Rmb^d} b(X_u^G(s),x)G_n(u,v)\,[\mu^{G_n}_{v,s}-\mu^G_{v,s}](dx) \right|^2 dv\,du \notag \\
		& \quad + \kappa \int_I \Emb \left| \int_I \int_{\Rmb^d} b(X_u^G(s),x)[G_n(u,v)-G(u,v)]\,\mu^G_{v,s}(dx)\,dv \right|^2 du \notag \\
		& =: \kappa \left( \Jmc_s^{n,1} + \Jmc_s^{n,2} + \Jmc_s^{n,3} \right). \label{eq:graphon2}
	\end{align}
	
	Now we analyze each term $\Jmc^{n,k}_s$, $k=1,2,3$.
	Using the Lipschitz property of $b$ we have
	\begin{equation}
		\Jmc_s^{n,1} 
		\le \kappa \int_I \Emb |X_u^{G_n}(s) - X_u^G(s)|^2 \, du. \label{eq:graphon3}
	\end{equation}
	Using Remark \ref{rmk:Wass-dual} and the Lipschitz property of $b$ we have
	\begin{equation}
		\Jmc_s^{n,2} 
		\le \kappa \int_I [W_{2,s}(\mu_v^{G_n},\mu_v^G)]^2 \, dv. \label{eq:graphon4}
	\end{equation}	
	For the last term $\Jmc^{n,3}_s$, 
	fix $M \in (0,\infty)$ and write 
	\begin{equation}
		\label{eq:bM}
		b_M(x,y) := b(x,y) \one_{\{|x|\le M, |y|\le M\}}.
	\end{equation}
	Since $b$ is Lipschitz continuous, it follows from \cite[Corollary 2 of Theorem 3.1]{Schultz1969multivariate} that there exist some $m=m(M) \in \Nmb$ and polynomials
	\begin{equation}
		\label{eq:bm}
		\btil_m(x,y) := \sum_{k=1}^m a_k(x) c_k(y) \one_{\{|x|\le M, |y|\le M\}},
	\end{equation} 
	where $a_k(x)$ and $c_k(y)$ are powers of $x$ and $y$ {respectively}, for each $k=1,\dotsc,m$,
	such that
	\begin{equation}
		\label{eq:graphon-bm}
		|b_M(x,y)-\btil_m(x,y)| \le 1/M, \quad {x,y \in \Rmb^d}.
	\end{equation}
	By adding and subtracting terms, we have
	\begin{align}
		\Jmc_s^{n,3} 
		& \le \kappa \int_I \int_I \Emb \left| \int_{\Rmb^d} [b(X_u^G(s),x)-b_M(X_u^G(s),x)] [G_n(u,v)-G(u,v)]\,\mu^G_{v,s}(dx) \right|^2 dv\,du \notag \\
		& + \kappa \int_I \int_I \Emb \left| \int_{\Rmb^d} [b_M(X_u^G(s),x)-\btil_m(X_u^G(s),x)] [G_n(u,v)-G(u,v)]\,\mu^G_{v,s}(dx) \right|^2 dv\,du \notag \\
		& + \kappa \int_I \Emb \left| \int_I \int_{\Rmb^d} \btil_m(X_u^G(s),x) [G_n(u,v)-G(u,v)]\,\mu^G_{v,s}(dx)\,dv \right|^2 du \notag \\
		& =: \kappa \sum_{k=1}^3 \Jmc_s^{n,3,k}. \label{eq:graphon5}
	\end{align}
	Next we analyze each term $\Jmc_s^{n,3,k}$, $k=1,2,3$.
	For $\Jmc_s^{n,3,1}$, using the Lipschitz property of $b$,
	\eqref{eq:bM} and Proposition \ref{prop:limit-existence-uniqueness} we have
	\begin{equation}
		\Jmc_s^{n,3,1} \le \kappa \int_I \int_I \Emb \left| \int_{\Rmb^d} (1+|X_u^G(s)|+|x|)(\one_{\{|X_u^G(s)|>M\}}+\one_{\{|x|>M\}})\,\mu_{v,s}^G(dx)\right|^2dv\,du \le \frac{\kappa}{M^\varepsilon}. \label{eq:graphon6}
	\end{equation}
	For $\Jmc_s^{n,3,2}$, using \eqref{eq:graphon-bm} we have
	\begin{equation}
		\Jmc_s^{n,3,2} \le \frac{\kappa}{M^2}. \label{eq:graphon7}
	\end{equation}
	For $\Jmc_s^{n,3,3}$, using the definition of the bounded function $\btil_m$ in \eqref{eq:bm} we have
	\begin{align}
		\Jmc_s^{n,3,3} & \le \kappa(M) \int_I \Emb \left| \int_I \int_{\Rmb^d} \btil_m(X_u^G(s),x) [G_n(u,v)-G(u,v)]\,\mu^G_{v,s}(dx)\,dv \right| du \notag \\ 
		& \le \kappa(M) \sum_{k=1}^m \int_I \Emb \left[|a_k(X_u^G(s))|\one_{\{|X_u^G(s)|\le M\}}\right] \notag \\
		& \qquad \cdot \left| \int_I \left[ G_n(u,v) - G(u,v) \right] \left[ \int_{\Rmb^d} c_k(x)\one_{\{|x|\le M\}} \mu_{v,s}^G(dx) \right] dv \right| du \notag \\
		& \le \kappa(M) \|G_n-G\|, \label{eq:graphon8}
	\end{align}
	where $\kappa(M)$ is some constant that depends on $M$ but not on $n$.
	Combining \eqref{eq:graphon1}--\eqref{eq:graphon4} and \eqref{eq:graphon5}--\eqref{eq:graphon8} with Gronwall's inequality, we have
	\begin{align*}
		\int_I [W_{2,t}(\mu_u^{G_n},\mu_u^G)]^2 \,du & \le \int_I \Emb \|X^{G_n}_u-X^G_u\|_{*,t}^2\,du \\
		& \le \kappa \left( \int_0^t \int_I [W_{2,s}(\mu_u^{G_n},\mu_u^G)]^2 \,du\,ds + \frac{1}{M^\varepsilon} + \kappa(M) \|G_n-G\| \right).
	\end{align*}
	It then follows from the Gronwall's inequality again that
	\begin{equation*}
		\int_I [W_{2,t}(\mu_u^{G_n},\mu_u^G)]^2 \,du \le \kappa \left( \frac{1}{M^\varepsilon} + \kappa(M) \|G_n-G\| \right).
	\end{equation*}
	Since $G_n \to G$ in the cut metric, from Remark \ref{rmk:graphon-convergence} we have $\|G_n-G\| \to 0$ as $n \to \infty$.
	Therefore, by taking $\limsup_{n \to \infty}$ and then $\limsup_{M \to \infty}$ in the last display, we have the desired result.
	This gives part (c) and completes the proof of Theorem \ref{thm:limit-system-property}.
	\qed

\begin{Remark}
	\label{rmk:epsilon-1}
	The assumption that $\varepsilon>0$ is used in \eqref{eq:graphon6}. 
	If $b$ and $\sigma$ are bounded functions, then one can allow $\varepsilon=0$ and replace the estimate in \eqref{eq:graphon6} by 
	$$\Jmc_s^{n,3,1} \le \kappa \int_I \int_I \Emb \left| \int_{\Rmb^d} (\one_{\{|X_u^G(s)|>M\}}+\one_{\{|x|>M\}})\,\mu_{v,s}^G(dx)\right|^2dv\,du \le \frac{\kappa}{M^2}.$$
\end{Remark}

\section{Proofs for Section \ref{sec:dense-system}}
\label{sec:dense-system-pf}

In this section we prove Theorems \ref{thm:dense-LLN} and \ref{thm:dense-LLN-special}.

\subsection{Preliminary Estimates}

We first provide some preliminary estimates under Condition \ref{cond:limit-G}.

\begin{Lemma}
	\label{lem:new1}
	Suppose Conditions \ref{cond:standing}, \ref{cond:limit-G} and \ref{cond:dense-Gn}(a,b) hold.
	Then there exist some $\kappa, \kappa(M) \in (0,\infty)$ for each $M \in (0,\infty)$ such that
	\begin{equation*}
		\limsup_{n \to \infty} \frac{1}{n} \sum_{i=1}^n \Emb \|X_i^n-X_{\frac{i}{n}}\|_{*,T}^2 \le \kappa(M) \limsup_{n \to \infty} \|G_n-G\| + \frac{\kappa}{M^\varepsilon}.
	\end{equation*}
\end{Lemma}

\begin{proof}
	Fix $t \in [0,T]$.
	\begin{align}
		& \frac{1}{n} \sum_{i=1}^n \Emb \|X_i^n-X_{\frac{i}{n}}\|_{*,t}^2 \notag \\
		& \le \kappa \int_0^t \left[ \frac{1}{n} \sum_{i=1}^n \Emb \left| \frac{1}{n} \sum_{j=1}^n \xi_{ij}^n b(X_i^n(s),X_j^n(s)) - \int_I \int_{\Rmb^d} b(X_{\frac{i}{n}}(s),x)G(\frac{i}{n},v)\,\mu_{v,s}(dx)\,dv \right|^2 \right] ds \notag \\
		& \quad + \kappa \int_0^t \left[ \frac{1}{n} \sum_{i=1}^n \Emb \left| \frac{1}{n} \sum_{j=1}^n \xi_{ij}^n \sigma(X_i^n(s),X_j^n(s)) - \int_I \int_{\Rmb^d} \sigma(X_{\frac{i}{n}}(s),x)G(\frac{i}{n},v)\,\mu_{v,s}(dx)\,dv \right|^2 \right] ds. \label{eq:dense1}
	\end{align}
	We will analyze the first integrand above for fixed $s \in [0,t]$, and the analysis for $\sigma$ is similar.
	By adding and subtracting terms, we have
	\begin{align}
		& \frac{1}{n} \sum_{i=1}^n \Emb \left| \frac{1}{n} \sum_{j=1}^n \xi_{ij}^n b(X_i^n(s),X_j^n(s)) - \int_I \int_{\Rmb^d} b(X_{\frac{i}{n}}(s),x)G(\frac{i}{n},v)\,\mu_{v,s}(dx)\,dv \right|^2 \notag \\
		& \le \frac{3}{n} \sum_{i=1}^n \Emb \left| \frac{1}{n} \sum_{j=1}^n \xi_{ij}^n \left( b(X_i^n(s),X_j^n(s)) - b(X_{\frac{i}{n}}(s),X_{\frac{j}{n}}(s)) \right) \right|^2 \notag \\
		& \quad + \frac{3}{n} \sum_{i=1}^n \Emb \left| \frac{1}{n} \sum_{j=1}^n \left( \xi_{ij}^n b(X_{\frac{i}{n}}(s),X_{\frac{j}{n}}(s)) - \int_{\Rmb^d} b(X_{\frac{i}{n}}(s),x)G_n(\frac{i}{n},\frac{j}{n})\,\mu_{\frac{j}{n},s}(dx)\right) \right|^2 \notag \\
		& \quad + \frac{3}{n} \sum_{i=1}^n \Emb \left| \frac{1}{n} \sum_{j=1}^n \int_{\Rmb^d} b(X_{\frac{i}{n}}(s),x)G_n(\frac{i}{n},\frac{j}{n})\,\mu_{\frac{j}{n},s}(dx) - \int_I \int_{\Rmb^d} b(X_{\frac{i}{n}}(s),x)G(\frac{i}{n},v)\,\mu_{v,s}(dx)\,dv \right|^2 \notag \\
		& =: 3 \left( \Tmc_s^{n,1} + \Tmc_s^{n,2} + \Tmc_s^{n,3} \right). \label{eq:dense2}
	\end{align}

	Fix $s \in [0,T]$.
	For $\Tmc_s^{n,1}$, using the Lipschitz property of $b$ we have
	\begin{align}
		\Tmc_s^{n,1} 
		& \le \kappa \frac{1}{n} \sum_{i=1}^n \Emb \left[ \frac{1}{n} \sum_{j=1}^n \left( |X_i^n(s)-X_{\frac{i}{n}}(s)|^2 + |X_j^n(s) - X_{\frac{j}{n}}(s)|^2 \right) \right] \notag \\
		& \le \kappa \frac{1}{n} \sum_{i=1}^n  \Emb |X_i^n(s) - X_{\frac{i}{n}}(s)|^2. \label{eq:dense-T1}
	\end{align}
	
	For $\Tmc_s^{n,2}$, using a weak LLN type argument, we have
	\begin{align}
		\Tmc_s^{n,2} 
		& 
		= \frac{1}{n} \sum_{i=1}^n \frac{1}{n^2} \sum_{j=1}^n \sum_{k=1}^n \Emb \left[ \left( \xi_{ij}^n b(X_{\frac{i}{n}}(s),X_{\frac{j}{n}}(s)) - \int_{\Rmb^d} b(X_{\frac{i}{n}}(s),x)G_n(\frac{i}{n},\frac{j}{n})\,\mu_{\frac{j}{n},s}(dx)\right) \right. \notag \\
		& \qquad \left. \cdot \left( \xi_{ik}^n b(X_{\frac{i}{n}}(s),X_{\frac{k}{n}}(s)) - \int_{\Rmb^d} b(X_{\frac{i}{n}}(s),x)G_n(\frac{i}{n},\frac{k}{n})\,\mu_{\frac{k}{n},s}(dx)\right) \right] \notag \\
		& = \frac{1}{n} \sum_{i=1}^n \frac{1}{n^2} \sum_{j=1}^n \sum_{k \in \{i,j\}} \Emb \left[ \left( \xi_{ij}^n b(X_{\frac{i}{n}}(s),X_{\frac{j}{n}}(s)) - \int_{\Rmb^d} b(X_{\frac{i}{n}}(s),x)G_n(\frac{i}{n},\frac{j}{n})\,\mu_{\frac{j}{n},s}(dx)\right) \right. \notag \\
		& \qquad \left. \cdot \left( \xi_{ik}^n b(X_{\frac{i}{n}}(s),X_{\frac{k}{n}}(s)) - \int_{\Rmb^d} b(X_{\frac{i}{n}}(s),x)G_n(\frac{i}{n},\frac{k}{n})\,\mu_{\frac{k}{n},s}(dx)\right) \right] \notag \\
		& \le \frac{\kappa}{n}, \label{eq:dense-T2}
	\end{align}
	where the second equality follows from the observation that the expectation is zero whenever $k \notin \{i,j\}$ by Condition \ref{cond:dense-Gn}(b) and the independence of $\{X_{\frac{i}{n}}\}$ and $\{\xi_{ij}^n\}$, and the inequality uses the boundedness of $\xi_{ij}^n, G_n$, Lipschitz property of $b$, and the uniformly finite second moment of $X_u$.

	The analysis of $\Tmc_s^{n,3}$ is similar to that of $\Jmc^{n,3}_s$ in the proof of Theorem \ref{thm:limit-system-property}(c) but is more involved.
	Fix $M \in (0,\infty)$ and let $b_M$ and $\btil_m$ be defined as in \eqref{eq:bM} and \eqref{eq:bm}, such that \eqref{eq:graphon-bm} holds.
	By adding and subtracting terms, we have
	\begin{align}
		\Tmc_s^{n,3} 
		& \le \frac{\kappa}{n} \sum_{i=1}^n \Emb \left| \frac{1}{n} \sum_{j=1}^n \int_{\Rmb^d} [b(X_{\frac{i}{n}}(s),x)-b_M(X_{\frac{i}{n}}(s),x)]G_n(\frac{i}{n},\frac{j}{n})\,\mu_{\frac{j}{n},s}(dx) \right|^2 \notag \\
		& \quad + \frac{\kappa}{n} \sum_{i=1}^n \Emb \left| \int_I \int_{\Rmb^d} [b(X_{\frac{i}{n}}(s),x)-b_M(X_{\frac{i}{n}}(s),x)]G(\frac{i}{n},v)\,\mu_{v,s}(dx)\,dv \right|^2 \notag \\
		& \quad + \frac{\kappa}{n} \sum_{i=1}^n \Emb \left| \frac{1}{n} \sum_{j=1}^n \int_{\Rmb^d} [b_M(X_{\frac{i}{n}}(s),x)-\btil_m(X_{\frac{i}{n}}(s),x)]G_n(\frac{i}{n},\frac{j}{n})\,\mu_{\frac{j}{n},s}(dx) \right|^2 \notag \\
		& \quad + \frac{\kappa}{n} \sum_{i=1}^n \Emb \left| \int_I \int_{\Rmb^d} [b_M(X_{\frac{i}{n}}(s),x)-\btil_m(X_{\frac{i}{n}}(s),x)]G(\frac{i}{n},v)\,\mu_{v,s}(dx)\,dv \right|^2 \notag \\
		& \quad + \frac{\kappa}{n} \sum_{i=1}^n \Emb \left| \frac{1}{n} \sum_{j=1}^n \int_{\Rmb^d} \btil_m(X_{\frac{i}{n}}(s),x)G_n(\frac{i}{n},\frac{j}{n})\,\mu_{\frac{j}{n},s}(dx) \right. \notag \\
		& \left. \qquad - \int_I \int_{\Rmb^d} \btil_m(X_{\frac{i}{n}}(s),x)G(\frac{i}{n},v)\,\mu_{v,s}(dx)\,dv \right|^2 \notag \\
		& =: \kappa \sum_{k=1}^5 \Tmc_s^{n,3,k}. \label{eq:dense-T30}
	\end{align}
	Next we analyze each term.
	For $\Tmc_s^{n,3,1}$ and $\Tmc_s^{n,3,2}$, using Proposition \ref{prop:limit-existence-uniqueness}
	and \eqref{eq:bM} we have
	\begin{align}
		\Tmc_s^{n,3,1} & \le  \frac{\kappa}{n} \sum_{i=1}^n \Emb \left| \frac{1}{n} \sum_{j=1}^n \int_{\Rmb^d} (1+|X_{\frac{i}{n}}(s)|+|x|)[\one_{\{|X_{\frac{i}{n}}(s)|>M\}}+\one_{\{|x|>M\}}]\,\mu_{\frac{j}{n},s}(dx) \right|^2 \le \frac{\kappa}{M^\varepsilon}, \label{eq:dense-T31} \\
		\Tmc_s^{n,3,2} & \le  \frac{\kappa}{n} \sum_{i=1}^n \Emb \left| \int_I \int_{\Rmb^d} (1+|X_{\frac{i}{n}}(s)|+|x|)[\one_{\{|X_{\frac{i}{n}}(s)|>M\}}+\one_{\{|x|>M\}}]\,\mu_{v,s}(dx)\,dv \right|^2 \le \frac{\kappa}{M^\varepsilon}. \label{eq:dense-T32}
	\end{align}
	For $\Tmc_s^{n,3,3}$ and $\Tmc_s^{n,3,4}$, using \eqref{eq:graphon-bm} we have
	\begin{equation}
		\Tmc_s^{n,3,3} \le \frac{\kappa}{M^2}, \quad \Tmc_s^{n,3,4} \le \frac{\kappa}{M^2} \label{eq:dense-T33-T34}.
	\end{equation}
	For $\Tmc_s^{n,3,5}$, using the step graphon structure \eqref{eq:step-graphon} of $G_n$ we have
	\begin{align}
		\Tmc_s^{n,3,5} & = \int_I \Emb \left| \int_I \int_{\Rmb^d} \btil_m(X_{\frac{\lceil nu \rceil}{n}}(s),x)G_n(u,v)\,\mu_{\frac{\lceil nv \rceil}{n},s}(dx)\,dv \right. \notag \\
		& \qquad \left. - \int_I \int_{\Rmb^d} \btil_m(X_{\frac{\lceil nu \rceil}{n}}(s),x)G(\frac{\lceil nu \rceil}{n},v)\,\mu_{v,s}(dx)\,dv \right|^2 du \notag \\
		& \le 2\int_I \Emb \left| \int_I \int_{\Rmb^d} \btil_m(X_{\frac{\lceil nu \rceil}{n}}(s),x) \left[ G_n(u,v) - G(u,v) \right] \mu_{\frac{\lceil nv \rceil}{n},s}(dx)\,dv \right|^2 du \notag \\
		& \quad + 2\int_I \Emb \left| \int_I \int_{\Rmb^d} \btil_m(X_{\frac{\lceil nu \rceil}{n}}(s),x)G(u,v)\,\mu_{\frac{\lceil nv \rceil}{n},s}(dx)\,dv \right. \notag \\
		& \qquad \left. - \int_I \int_{\Rmb^d} \btil_m(X_{\frac{\lceil nu \rceil}{n}}(s),x)G(\frac{\lceil nu \rceil}{n},v)\,\mu_{v,s}(dx)\,dv \right|^2 du \notag \\
		& =: \Tmc_s^{n,3,6} + \Tmc_s^{n,3,7}. \label{eq:dense-T35}
	\end{align}
	For $\Tmc_s^{n,3,6}$, using the definition of the bounded function $\btil_m$ in \eqref{eq:bm} we have
	\begin{align*}
		\Tmc_s^{n,3,6} & \le \kappa(M) \sum_{k=1}^m \int_I \left| \int_I \left[ G_n(u,v) - G(u,v) \right] \left[ \int_{\Rmb^d} c_k(x)\one_{\{\|x\|\le M\}} \mu_{\frac{\lceil nv \rceil}{n},s}(dx) \right] dv \right| du \\
		& \le \kappa(M) \|G_n-G\|,
	\end{align*}
	where $\kappa(M)$ depends on $M$ but not on $n$.
	For $\Tmc_s^{n,3,7}$, using Condition \ref{cond:limit-G} and Theorem \ref{thm:limit-system-property}(a) we see that 
	{
	$$\int_{\Rmb^d} \btil_m(X_{\frac{\lceil nu \rceil}{n}}(s),x)G(u,v)\,\mu_{\frac{\lceil nv \rceil}{n},s}(dx) - \int_{\Rmb^d} \btil_m(X_{\frac{\lceil nu \rceil}{n}}(s),x)G(\frac{\lceil nu \rceil}{n},v)\,\mu_{v,s}(dx) \to 0$$
	as $n \to \infty$, for each $u \in I$ and $v \in I \setminus A_u$.
	Since the set $A_u$ is assumed to have Lebesgue measure zero in Condition \ref{cond:limit-G}(b), we have}
	\begin{equation}
		\lim_{n \to \infty} \int_0^T \Tmc_s^{n,3,7} \, ds = 0. \label{eq:dense-T37}
	\end{equation}
	Combining \eqref{eq:dense-T30}--\eqref{eq:dense-T37} gives
	\begin{equation}
		\label{eq:dense-T3}
		\limsup_{n \to \infty} \int_0^T \Tmc_s^{n,3} \, ds \le \kappa(M)\limsup_{n \to \infty}\|G_n-G\|+\frac{\kappa}{M^\varepsilon}.
	\end{equation}
	
	Combining \eqref{eq:dense1}--\eqref{eq:dense-T2} gives
	\begin{equation*}
		\frac{1}{n} \sum_{i=1}^n \Emb \|X_i^n-X_{\frac{i}{n}}\|_{*,t}^2 \le \kappa \int_0^t \frac{1}{n} \sum_{i=1}^n \Emb \|X_i^n-X_{\frac{i}{n}}\|_{*,s}^2 \,ds + \frac{\kappa}{n} + \kappa \int_0^t \Tmc_s^{n,3} \, ds.
	\end{equation*}
	Using the Gronwall's inequality and \eqref{eq:dense-T3} completes the proof.
\end{proof}

\begin{Remark}
	\label{rmk:epsilon-2}
	The assumption that $\varepsilon>0$ is used in \eqref{eq:dense-T31} and \eqref{eq:dense-T32}. 
	If $b$ and $\sigma$ are bounded functions, then one can allow $\varepsilon=0$ and replace the estimate in \eqref{eq:dense-T31} and \eqref{eq:dense-T32} by 
	\begin{align*}
		\Tmc_s^{n,3,1} & \le  \frac{\kappa}{n} \sum_{i=1}^n \Emb \left| \frac{1}{n} \sum_{j=1}^n \int_{\Rmb^d} [\one_{\{|X_{\frac{i}{n}}(s)|>M\}}+\one_{\{|x|>M\}}]\,\mu_{\frac{j}{n},s}(dx) \right|^2 \le \frac{\kappa}{M^2}, \\
		\Tmc_s^{n,3,2} & \le  \frac{\kappa}{n} \sum_{i=1}^n \Emb \left| \int_I \int_{\Rmb^d} [\one_{\{|X_{\frac{i}{n}}(s)|>M\}}+\one_{\{|x|>M\}}]\,\mu_{v,s}(dx)\,dv \right|^2 \le \frac{\kappa}{M^2}.
	\end{align*}
	As a consequence, the result in Lemma \ref{lem:new1} is rewritten as $$\limsup_{n \to \infty} \frac{1}{n} \sum_{i=1}^n \Emb \|X_i^n-X_{\frac{i}{n}}\|_{*,T}^2 \le \kappa(M) \limsup_{n \to \infty} \|G_n-G\| + \frac{\kappa}{M^2}.$$
\end{Remark}

\begin{Lemma}
	\label{lem:new2}
	Suppose Conditions \ref{cond:standing} and \ref{cond:limit-G} hold.
	Let $\mubar^n := \frac{1}{n} \sum_{i=1}^n \delta_{X_{\frac{i}{n}}}$.
	Then $\mubar^n \to \mubar$ in $\Pmc(\Cmc_d)$ in probability as $n \to \infty$.
\end{Lemma}

\begin{proof}
	Let
	$\mutil^n := \frac{1}{n} \sum_{i=1}^n \mu_{\frac{i}{n}}.$	
	For each bounded and continuous function $f$ on $\Cmc_d$, using the independence of $\{X_{\frac{i}{n}}\}$, we have
	\begin{align*}
		\Emb \left( \lan f,\mubar^n \ran -\lan f,\mutil^n \ran \right)^2 & = \Emb \left( \frac{1}{n} \sum_{i=1}^n \left( f(X_{\frac{i}{n}}) - \Emb f(X_{\frac{i}{n}}) \right) \right)^2 \le \frac{\|f\|_\infty^2}{n} \to 0.
	\end{align*}
	It then follows from Condition \ref{cond:limit-G} and Theorem \ref{thm:limit-system-property}(a) that
	\begin{equation*}
	\lan f,\mutil^n \ran -\lan f,\mubar \ran = \frac{1}{n} \sum_{i=1}^n \lan f, \mu_{\frac{i}{n}} \ran - \int_I \lan f, \mu_u \ran \, du \to 0.
	\end{equation*}
	Combining these two estimates completes the proof. 
\end{proof}

\subsection{Proof of Theorem \ref{thm:dense-LLN}}
\label{sec:dense-LLN-pf}

	If Condition \ref{cond:limit-G}(b) holds, then \eqref{eq:dense-X-cvg} follows on a direct application of Lemma \ref{lem:new1} by sending $M \to \infty$.
	
	Next we prove \eqref{eq:dense-mu-cvg} by an approximation argument that is commonly used to analyze graphons (see e.g.\ \cite{BollobasJansonRiordan2007phase}).
	Fix $\eta \in (0,1)$. 
	Since $0 \le G \le 1$, using Lusin's theorem (\cite[Theorem 2.24]{Rudin1987real}), we can approximate $G$ by continuous functions $\Gtil$ with $0 \le \Gtil \le 1$.
	Namely, we can find some continuous graphon $\Gtil = \Gtil_\eta$ such that $$\|\Gtil-G\| < \eta.$$
	It then follows from Theorem \ref{thm:limit-system-property}(c) that 
	\begin{equation}
		\label{eq:dense-LLN-pf-0}
		{W_{2,T}}(\mubar^{\Gtil},\mubar) \le \kappa \eta,
	\end{equation}
	{where we denote by $X^\Gtil = \{X_u^\Gtil : u \in I\}$} the solution of \eqref{eq:limit-system-dense} corresponding to the graphon $\Gtil$ and let $\mubar^\Gtil := \int_I \Lmc(X^\Gtil_u) \, du$.
	Since Condition \ref{cond:limit-G} holds for $\Gtil$, from Lemma \ref{lem:new2} we have 
	\begin{equation}
		\label{eq:dense-LLN-pf-1}
		\frac{1}{n} \sum_{i=1}^n \delta_{X_{\frac{i}{n}}^\Gtil} \to \mubar^\Gtil
	\end{equation}
	in $\Pmc(\Cmc_d)$ in probability as $n \to \infty$.
	Note that
	\begin{equation*}
		[{W_{2,T}}(\mu^n,\frac{1}{n} \sum_{i=1}^n \delta_{X_{\frac{i}{n}}^\Gtil})]^2 \le \frac{1}{n} \sum_{i=1}^n \|X_i^n-X_{\frac{i}{n}}^\Gtil\|_{*,T}^2.
	\end{equation*}
	Since Condition \ref{cond:limit-G} holds for $\Gtil$, it follows from Lemma \ref{lem:new1} and Remark \ref{rmk:graphon-convergence} that
	\begin{equation*}
		\limsup_{n \to \infty} \Emb {W_{2,T}}(\mu^n,\frac{1}{n} \sum_{i=1}^n \delta_{X_{\frac{i}{n}}^\Gtil}) \le \sqrt{\kappa(M) \limsup_{n \to \infty} \|G_n-\Gtil\| + \frac{\kappa }{M^\varepsilon}} \le \sqrt{\kappa(M) \eta + \frac{\kappa }{M^\varepsilon}}.
	\end{equation*}
	{
	Taking $\eta \to 0$ and then $M \to \infty$ gives
	\begin{equation}
		\label{eq:dense-LLN-pf-2}
		\limsup_{\eta \to 0} \limsup_{n \to \infty} \Emb {W_{2,T}}(\mu^n,\frac{1}{n} \sum_{i=1}^n \delta_{X_{\frac{i}{n}}^\Gtil}) = 0.
	\end{equation}	
	Combining \eqref{eq:dense-LLN-pf-0}--\eqref{eq:dense-LLN-pf-2}, taking $n \to \infty$ and then sending $\eta \to 0$, we have \eqref{eq:dense-mu-cvg}.}	
	\qed	

\subsection{Proof of Theorem \ref{thm:dense-LLN-special}}
\label{sec:dense-LLN-special-pf}

	Fix $t \in [0,T]$ and $i \in \{1,\dotsc,n\}$.
	Similar to the proof of Lemma \ref{lem:new1}, we have
	\begin{align}
		& \Emb \|X_i^n-X_{\frac{i}{n}}\|_{*,t}^2 \notag \\
		& \le \kappa \Emb \int_0^t \left| \frac{1}{n} \sum_{j=1}^n \xi_{ij}^n b(X_i^n(s),X_j^n(s)) - \int_I \int_{\Rmb^d} b(X_{\frac{i}{n}}(s),x)G(\frac{i}{n},v)\,\mu_{v,s}(dx)\,dv \right|^2 ds \notag \\
		& \quad + \kappa \Emb \int_0^t \left| \frac{1}{n} \sum_{j=1}^n \xi_{ij}^n \sigma(X_i^n(s),X_j^n(s)) - \int_I \int_{\Rmb^d} \sigma(X_{\frac{i}{n}}(s),x)G(\frac{i}{n},v)\,\mu_{v,s}(dx)\,dv \right|^2 ds. \label{eq:dense5}
	\end{align}
	We will analyze the first integrand above for fixed $s \in [0,t]$, and the analysis for $\sigma$ is similar.
	By adding and subtracting terms, we have
	\begin{align}
		& \Emb \left| \frac{1}{n} \sum_{j=1}^n \xi_{ij}^n b(X_i^n(s),X_j^n(s)) - \int_I \int_{\Rmb^d} b(X_{\frac{i}{n}}(s),x)G(\frac{i}{n},v)\,\mu_{v,s}(dx)\,dv \right|^2 \notag \\
		& \le 3 \Emb \left| \frac{1}{n} \sum_{j=1}^n \xi_{ij}^n \left( b(X_i^n(s),X_j^n(s)) - b(X_{\frac{i}{n}}(s),X_{\frac{j}{n}}(s)) \right) \right|^2 \notag \\
		& \quad + 3 \Emb \left| \frac{1}{n} \sum_{j=1}^n \left( \xi_{ij}^n b(X_{\frac{i}{n}}(s),X_{\frac{j}{n}}(s)) - \int_{\Rmb^d} b(X_{\frac{i}{n}}(s),x)G(\frac{i}{n},\frac{j}{n})\,\mu_{\frac{j}{n},s}(dx)\right) \right|^2 \notag \\
		& \quad + 3 \Emb \left| \frac{1}{n} \sum_{j=1}^n \int_{\Rmb^d} b(X_{\frac{i}{n}}(s),x)G(\frac{i}{n},\frac{j}{n})\,\mu_{\frac{j}{n},s}(dx) - \int_I \int_{\Rmb^d} b(X_{\frac{i}{n}}(s),x)G(\frac{i}{n},v)\,\mu_{v,s}(dx)\,dv \right|^2 \notag \\
		& =: 3 \left( \Tmctil_s^{n,1} + \Tmctil_s^{n,2} + \Tmctil_s^{n,3} \right). \label{eq:dense6}
	\end{align}
	For $\Tmctil^{n,1}_s$, using the Lipschitz property of $b$ we have
	\begin{align}
		\Tmctil_s^{n,1} 
		& \le \kappa \Emb \left[ \frac{1}{n} \sum_{j=1}^n \left( |X_i^n(s)-X_{\frac{i}{n}}(s)|^2 + |X_j^n(s) - X_{\frac{j}{n}}(s)|^2 \right) \right]
		\le 2\kappa \max_{i=1,\dotsc,n} \Emb |X_i^n(s) - X_{\frac{i}{n}}(s)|^2. \label{eq:dense7}
	\end{align}
	For $\Tmctil^{n,2}_s$, using Condition \ref{cond:dense-Gn-special}, the independence of $\{X_{\frac{i}{n}}\}$ and $\{\xi_{ij}^n\}$, the Lipschitz property of $b$, Proposition \ref{prop:limit-existence-uniqueness}, and a weak LLN type argument, we have
	\begin{align}
		\Tmctil_s^{n,2} & \le \frac{\kappa}{n}. \label{eq:dense8}
	\end{align}
	For $\Tmctil^{n,3}_s$, we have 
	\begin{align}
		\Tmctil^{n,3}_s & = \Emb \left| \int_I \int_{\Rmb^d} b(X_{\frac{i}{n}}(s),x)G(\frac{i}{n},\frac{\lceil nv \rceil}{n})\,\mu_{\frac{\lceil nv \rceil}{n},s}(dx) \,dv - \int_I \int_{\Rmb^d} b(X_{\frac{i}{n}}(s),x)G(\frac{i}{n},v)\,\mu_{v,s}(dx)\,dv \right|^2 \notag \\
		& \le 2 \Emb \left| \int_I \int_{\Rmb^d} b(X_{\frac{i}{n}}(s),x) \left[ G(\frac{i}{n},\frac{\lceil nv \rceil}{n}) - G(\frac{i}{n},v) \right] \mu_{\frac{\lceil nv \rceil}{n},s}(dx)\,dv \right|^2 \notag \\
		& \quad + 2 \Emb \left| \int_I \left[ \int_{\Rmb^d} b(X_{\frac{i}{n}}(s),x)\,\mu_{\frac{\lceil nv \rceil}{n},s}(dx) \,dv - \int_{\Rmb^d} b(X_{\frac{i}{n}}(s),x)\,\mu_{v,s}(dx)\right]G(\frac{i}{n},v)\,dv \right|^2 \notag \\
		& \le \frac{\kappa}{n^2}, \label{eq:dense9}
	\end{align}
	where the last inequality uses Condition \ref{cond:limit-G-special}, Theorem \ref{thm:limit-system-property}(b) and Remark \ref{rmk:Wass-dual}.
	Combining \eqref{eq:dense5}--\eqref{eq:dense9} with Gronwall's inequality gives \eqref{eq:dense-special-X-cvg} and completes the proof of Theorem \ref{thm:dense-LLN-special}. \qed

\section{Proofs for Section \ref{sec:sparse-system}}
\label{sec:sparse-system-pf}

In this section we prove Theorems \ref{thm:sparse-LLN} and \ref{thm:sparse-LLN-special}.

\subsection{Preliminary Estimates}

We first provide some preliminary estimates.


\begin{Lemma}
	\label{lem:sparse-moment}
	Suppose Condition \ref{cond:standing-sparse} holds.
	Suppose either Condition \ref{cond:sparse-Gn} or Condition \ref{cond:sparse-Gn-special} holds. 
	Then
	$$\sup_{n \in \Nmb} \max_{i=1,\dotsc,n} \Emb \| X_i^n-X_{\frac{i}{n}} \|_{*,T}^k < \infty, \quad \forall k \in \Nmb.$$
\end{Lemma}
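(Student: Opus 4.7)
The plan is to exploit the coupling that $X_i^n$ and $X_{\frac{i}{n}}$ share the same initial condition $X_{\frac{i}{n}}(0)$ and the same driving Brownian motion $B_{\frac{i}{n}}$, so that $Y_i^n := X_i^n - X_{\frac{i}{n}}$ vanishes at $t=0$. Subtracting the two defining SDEs shows that $Y_i^n(t)$ grows from just two sources: (i) the difference of the two drift terms, and (ii) the stochastic integral $M_i^n(t) := \int_0^t [\sigma(X_i^n(s)) - \sigma(X_{\frac{i}{n}}(s))]\,dB_{\frac{i}{n}}(s)$. Crucially, $b$ and $\sigma$ are both globally bounded, so no Gronwall iteration is needed here---simple pointwise bounds combined with BDG will close the estimate without any feedback from the state moments.

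Set $S_i^n := \sum_{j=1}^n \xi_{ij}^n$. Using $\|b\|_\infty < \infty$ and $G \le 1$, the drift of $X_i^n$ is pointwise bounded by $\|b\|_\infty S_i^n/(n\beta_n)$, while the drift of $X_{\frac{i}{n}}$ is bounded by $\|b\|_\infty$, so the triangle inequality bounds the drift difference by $\|b\|_\infty(S_i^n/(n\beta_n) + 1)$. For $M_i^n$, the integrand is bounded by $2\|\sigma\|_\infty$, so BDG yields $\Emb \|M_i^n\|_{*,T}^k \le \kappa(k) T^{k/2} \|\sigma\|_\infty^k$ uniformly in $n$ and $i$. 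Combining these pieces gives
\[
\Emb \|Y_i^n\|_{*,T}^k \le \kappa(k)\left[ T^k \|b\|_\infty^k\, \Emb\left(1 + \frac{S_i^n}{n\beta_n}\right)^k + T^{k/2} \|\sigma\|_\infty^k \right].
\]

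The main (and only real) obstacle is then to verify the uniform moment bound $\sup_{n,i} \Emb(S_i^n/(n\beta_n))^k < \infty$. For each fixed $i$, the family $\{\xi_{ij}^n\}_{j=1}^n$ consists of independent Bernoullis with parameters at most $\beta_n$ under either of Conditions \ref{cond:sparse-Gn} or \ref{cond:sparse-Gn-special}. Expanding $(S_i^n)^k$ as a multi-index sum and grouping terms by the number $r$ of distinct indices, together with the identity $(\xi_{ij}^n)^m = \xi_{ij}^n$, yields $\Emb(S_i^n)^k \le \kappa(k) \sum_{r=1}^k (n\beta_n)^r$. The hypothesis $\liminf_n n\beta_n > 0$ then gives $\sup_{n,i} \Emb(S_i^n/(n\beta_n))^k < \infty$, and plugging back into the previous display completes the proof.
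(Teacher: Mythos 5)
Your proof is correct and follows the same overall strategy as the paper: exploit the shared initial condition and Brownian motion, bound the drift difference by a constant times $1+S_i^n/(n\beta_n)$ via $\|b\|_\infty<\infty$ and $G\le 1$, control the stochastic integral by BDG and $\|\sigma\|_\infty<\infty$, and thereby reduce everything to a uniform $k$-th moment bound on $S_i^n/(n\beta_n)$. The one place you diverge is in proving that last moment bound: the paper cites Rosenthal's inequality for sums of independent random variables, obtaining
$\Emb(S_i^n)^k \le \kappa(k)\bigl[\sum_j \Emb(\xi_{ij}^n)^k + (\sum_j \Emb(\xi_{ij}^n)^2)^{k/2}\bigr]$,
whereas you expand $(S_i^n)^k$ as a multi-index sum, use $(\xi_{ij}^n)^m=\xi_{ij}^n$, group by the number $r$ of distinct indices, and get $\Emb(S_i^n)^k\le \kappa(k)\sum_{r=1}^k (n\beta_n)^r$ directly. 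Your route is more elementary (no external inequality needed) at the cost of a short combinatorial count; the paper's is one line shorter but leans on a named theorem. Both close the argument via $\liminf_n n\beta_n>0$. One small point worth making explicit: when you pass from $\Emb(S_i^n)^k\le\kappa(k)\sum_{r=1}^k(n\beta_n)^r$ to a bound on $\Emb(S_i^n/(n\beta_n))^k$, the terms $(n\beta_n)^{r-k}$ with $r<k$ are only bounded for $n$ large (where $n\beta_n$ is bounded away from $0$); for the finitely many remaining $n$ the left-hand side is trivially finite, so the supremum over $n$ is still finite, but a reader would appreciate this being said.
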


\begin{proof}[Proof of Lemma \ref{lem:sparse-moment}]
	Fix $k \in \Nmb$ and $i \in \{1,\dotsc,n\}$.
	Since $b$ and $\sigma$ are bounded, we have
	\begin{align*}
		\Emb \|X_i^n-X_{\frac{i}{n}}\|_{*,T}^k & \le \kappa \Emb \left| \frac{1}{n\beta_n} \sum_{j=1}^n \xi_{ij}^n \right|^k + \kappa \\
		& \le \kappa \frac{\sum_{j=1}^n \Emb [(\xi_{ij}^n)^k] + \left(\sum_{j=1}^n E[(\xi_{ij}^n)^2]\right)^{k/2}}{(n\beta_n)^k} + \kappa \le \kappa,
	\end{align*}
	where the second inequality follows from the Rosenthal's inequality \cite[Theorem 3]{Rosenthal1970subspaces} and the last inequality uses the assumption $\lim_{n \to \infty} n\beta_n = \infty$. 
\end{proof}

\begin{Lemma}
	\label{lem:new-sparse}
	Suppose Conditions \ref{cond:limit-G}, \ref{cond:standing-sparse} and \ref{cond:sparse-Gn}(a,b) hold.
	Then there exist some $\kappa, \kappa(M) \in (0,\infty)$ for each $M \in (0,\infty)$ such that
	\begin{equation*}
		\limsup_{n \to \infty} \frac{1}{n} \sum_{i=1}^n \Emb \|X_i^n-X_{\frac{i}{n}}\|_{*,T}^2 \le \kappa(M) \limsup_{n \to \infty} \|G_n-G\| + \frac{\kappa}{M^2}.
	\end{equation*}	
\end{Lemma}

\begin{proof}
	Fix $t \in [0,T]$.
	\begin{align}
		& \frac{1}{n} \sum_{i=1}^n \Emb \|X_i^n-X_{\frac{i}{n}}\|_{*,t}^2 \notag \\
		& \le \kappa \int_0^t \left[ \frac{1}{n} \sum_{i=1}^n \Emb \left| \frac{1}{n\beta_n} \sum_{j=1}^n \xi_{ij}^n b(X_i^n(s),X_j^n(s)) - \int_I \int_{\Rmb^d} b(X_{\frac{i}{n}}(s),x)G(\frac{i}{n},v)\,\mu_{v,s}(dx)\,dv \right|^2 \right] ds \notag \\
		& \quad + \kappa \int_0^t \left[ \frac{1}{n} \sum_{i=1}^n \Emb \left| X_i^n(s) - X_{\frac{i}{n}}(s) \right|^2 \right] ds. \label{eq:sparse1}
	\end{align}
	We will analyze the first integrand above for fixed $s \in [0,t]$.
	By adding and subtracting terms, we have
	\begin{align}
		& \frac{1}{n} \sum_{i=1}^n \Emb \left| \frac{1}{n\beta_n} \sum_{j=1}^n \xi_{ij}^n b(X_i^n(s),X_j^n(s)) - \int_I \int_{\Rmb^d} b(X_{\frac{i}{n}}(s),x)G(\frac{i}{n},v)\,\mu_{v,s}(dx)\,dv \right|^2 \notag \\
		& \le \frac{4}{n} \sum_{i=1}^n \Emb \left| \frac{1}{n} \sum_{j=1}^n \frac{\xi_{ij}^n}{\beta_n} \left( b(X_i^n(s),X_j^n(s)) - b(X_{\frac{i}{n}}(s),X_j^n(s)) \right) \right|^2 \notag \\
		& \quad + \frac{4}{n} \sum_{i=1}^n \Emb \left| \frac{1}{n} \sum_{j=1}^n \frac{\xi_{ij}^n}{\beta_n} \left( b(X_{\frac{i}{n}}(s),X_j^n(s)) - b(X_{\frac{i}{n}}(s),X_{\frac{j}{n}}(s)) \right) \right|^2 \notag \\
		& \quad + \frac{4}{n} \sum_{i=1}^n \Emb \left| \frac{1}{n} \sum_{j=1}^n \left( \frac{\xi_{ij}^n}{\beta_n} b(X_{\frac{i}{n}}(s),X_{\frac{j}{n}}(s)) - \int_{\Rmb^d} b(X_{\frac{i}{n}}(s),x)G_n(\frac{i}{n},\frac{j}{n})\,\mu_{\frac{j}{n},s}(dx)\right) \right|^2 \notag \\
		& \quad + \frac{4}{n} \sum_{i=1}^n \Emb \left| \frac{1}{n} \sum_{j=1}^n \int_{\Rmb^d} b(X_{\frac{i}{n}}(s),x)G_n(\frac{i}{n},\frac{j}{n})\,\mu_{\frac{j}{n},s}(dx) - \int_I \int_{\Rmb^d} b(X_{\frac{i}{n}}(s),x)G(\frac{i}{n},v)\,\mu_{v,s}(dx)\,dv \right|^2 \notag \\
		& =: 4 \left( \Rmc_s^{n,1} + \Rmc_s^{n,2} + \Rmc_s^{n,3} + \Rmc_s^{n,4} \right). \label{eq:sparse2}
	\end{align}
	
	Fix $s \in [0,T]$.	
	For $\Rmc_s^{n,1}$, using the Lipschitz property of $b$ we have
	\begin{align*}
		\Rmc_s^{n,1} 
		& \le \kappa \frac{1}{n} \sum_{i=1}^n \Emb \left( \frac{1}{n} \sum_{j=1}^n \frac{\xi_{ij}^n}{\beta_n} |X_i^n(s)-X_{\frac{i}{n}}(s)| \right)^2 \\
		& \le \kappa \frac{1}{n} \sum_{i=1}^n \Emb \left( \frac{1}{n} \sum_{j=1}^n \frac{\xi_{ij}^n-\beta_nG_n(\frac{i}{n},\frac{j}{n})}{\beta_n} |X_i^n(s)-X_{\frac{i}{n}}(s)| \right)^2 \\
		& \quad + \kappa \frac{1}{n} \sum_{i=1}^n \Emb \left( \frac{1}{n} \sum_{j=1}^n G_n(\frac{i}{n},\frac{j}{n}) |X_i^n(s)-X_{\frac{i}{n}}(s)| \right)^2.
	\end{align*}
	Since $\{\xi_{ij}^n-\beta_nG_n(\frac{i}{n},\frac{j}{n})\}$ are centered and independent, we have
	\begin{align*}
		& \Emb \left( \frac{1}{n} \sum_{j=1}^n \frac{\xi_{ij}^n-\beta_nG_n(\frac{i}{n},\frac{j}{n})}{\beta_n} \right)^4 \\
		& = \frac{1}{(n\beta_n)^4} \sum_{j=1}^n \Emb \left(\xi_{ij}^n-\beta_nG_n(\frac{i}{n},\frac{j}{n})\right)^4 \\
		& \quad + \frac{3}{(n\beta_n)^4} \sum_{j=1}^n \sum_{k \ne j}^n \Emb \left(\xi_{ij}^n-\beta_nG_n(\frac{i}{n},\frac{j}{n})\right)^2 \Emb \left(\xi_{ik}^n-\beta_nG_n(\frac{i}{n},\frac{k}{n})\right)^2 \\
		& \le \frac{1}{(n\beta_n)^3} + \frac{3}{(n\beta_n)^2}.
	\end{align*}
	From these two estimates, Cauchy--Schwarz inequality and Lemma \ref{lem:sparse-moment} we have 
	\begin{equation}
		\Rmc_s^{n,1} \le \frac{\kappa}{n\beta_n} + \frac{\kappa}{n} \sum_{i=1}^n \Emb |X_i^n(s)-X_{\frac{i}{n}}(s)|^2. \label{eq:sparse-R1}
	\end{equation}
	
	For $\Rmc_s^{n,3}$, using Condition \ref{cond:sparse-Gn}, the independence of $\{X_{\frac{i}{n}}\}$ and $\{\xi_{ij}^n\}$, the boundedness of $b$, and a weak LLN type argument, we have 
	\begin{equation}
		\Rmc_s^{n,3} \le \frac{\kappa}{n\beta_n}. \label{eq:sparse-R3}
	\end{equation}
	
	For $\Rmc_s^{n,4}$, similar to the proof of \eqref{eq:dense-T3} in Lemma \ref{lem:new1} and Remark \ref{rmk:epsilon-2}, using Conditions \ref{cond:limit-G}, \ref{cond:standing-sparse} and \ref{cond:sparse-Gn}(a,b) we could get
	\begin{equation}
		\limsup_{n \to \infty} \int_0^T \Rmc_s^{n,4} \, ds \le \kappa(M)\limsup_{n \to \infty}\|G_n-G\|+\frac{\kappa}{M^2}. \label{eq:sparse-R4}
	\end{equation}

	The analysis of $\Rmc_s^{n,2}$ is based on a collection of change of measure arguments.
	First note that
	\begin{align}
		\Rmc_s^{n,2} & \le \kappa \frac{1}{n} \sum_{i=1}^n \Emb \left( \frac{1}{n} \sum_{j=1}^n \frac{\xi_{ij}^n}{\beta_n} |X_j^n(s)-X_{\frac{j}{n}}(s)| \right)^2 \notag \\
		& = \kappa \frac{1}{n} \sum_{i=1}^n \frac{1}{(n\beta_n)^2} \left( \sum_{j=1}^n \Emb \left[ \xi_{ij}^n |X_j^n(s)-X_{\frac{j}{n}}(s)|^2 \right] \right. \notag \\
		& \left. \quad + \sum_{j=1}^n \sum_{k \ne j}^n \Emb \left[ \xi_{ij}^n \xi_{ik}^n |X_j^n(s)-X_{\frac{j}{n}}(s)| |X_k^n(s)-X_{\frac{k}{n}}(s)| \right] \right). \label{eq:Tn2-state-dependent}
	\end{align}	
	Fix $i,j \in \{1,\dotsc,n\}$.
	We want to show that $\Emb \left[ \xi_{ij}^n (X_j^n(s)-X_{\frac{j}{n}}(s))^2 \right]$ is roughly $\Emb \left[ (X_j^n(s)-X_{\frac{j}{n}}(s))^2 \right] \beta_n G_n(\frac{i}{n},\frac{j}{n})$ up to some errors, namely $\xi_{ij}^n$ is kind of independent of $X_j^n(s)-X_{\frac{j}{n}}(s)$.
	For this, we want to compare $\Lmc(X_j^n,X_{\frac{j}{n}} \,|\, \xi_{ij}^n=1)$ and $\Lmc(X_j^n,X_{\frac{j}{n}})$ using Girsanov's theorem.
	This heuristic suggests considering the following auxiliary processes
	\begin{align*}
		\Xtil_{\frac{i}{n}}(t) & = X_{\frac{i}{n}}(0) + \int_0^t \int_I \int_{\Rmb^d} b(\Xtil_{\frac{i}{n}}(s),x)G(\frac{i}{n},v)\,\mu_{v,s}(dx)\,dv\,ds + \int_0^t \sigma(\Xtil_{\frac{i}{n}}(s))\,dB_{\frac{i}{n}}(s) \\
		& \quad + \int_0^t \frac{\sigma(\Xtil_{\frac{i}{n}}(s))}{\sigma(X_i^n(s))} \frac{1}{n\beta_n} \left( \xi_{ij}^n - 1 \right) b(X_i^n(s),X_j^n(s)) \, ds, \\
		\Xtil_{\frac{j}{n}}(t) & = X_{\frac{j}{n}}(0) + \int_0^t \int_I \int_{\Rmb^d} b(\Xtil_{\frac{j}{n}}(s),x)G(\frac{j}{n},v)\,\mu_{v,s}(dx)\,dv\,ds + \int_0^t \sigma(\Xtil_{\frac{j}{n}}(s))\,dB_{\frac{j}{n}}(s) \\
		& \quad + \int_0^t \frac{\sigma(\Xtil_{\frac{j}{n}}(s))}{\sigma(X_j^n(s))} \frac{1}{n\beta_n} \left( \xi_{ji}^n - 1 \right) b(X_j^n(s),X_i^n(s)) \, ds.
	\end{align*}
	Note that the existence and uniqueness of such processes are guaranteed by the bounded and Lipschitz properties of $b$, $\sigma$ and $\sigma^{-1}$.
	Also using these properties and Gronwall's inequality we can show that
	\begin{equation}
		\label{eq:moment-auxiliary}
		\Emb \|X_{\frac{j}{n}}-\Xtil_{\frac{j}{n}}\|_{*,T}^m \le \frac{\kappa(m)}{(n\beta_n)^m}, \quad m \ge 0.
	\end{equation}
	Define $Q^{i,j,n}$ by 
	\begin{align*}
		\frac{dQ^{i,j,n}}{d\Pmb} & = \Emc_T \left( \int_0^\cdot \frac{1}{\sigma(X_i^n(s))} \frac{1}{n\beta_n} \left( 1 - \xi_{ij}^n \right) b(X_i^n(s),X_j^n(s)) \, dB_{\frac{i}{n}}(s) \right. \\
		& \quad \left. + \int_0^\cdot \frac{1}{\sigma(X_j^n(s))} \frac{1}{n\beta_n} \left( 1 - \xi_{ji}^n \right) b(X_j^n(s),X_i^n(s)) \, dB_{\frac{j}{n}}(s) \right),
	\end{align*}
	where
	$$\Emc_t(M) := \exp \left\{ M_t - \frac{1}{2} [M]_t \right\}$$
	is the Doleans exponential for a semi-martingale $M_t$.
	Since $b$ and $\sigma^{-1}$ are bounded, we have that $B_{\frac{i}{n}}(\cdot) - \int_0^\cdot \frac{1}{\sigma(X_i^n(s))} \frac{1}{n\beta_n} ( 1 - \xi_{ij}^n ) b(X_i^n(s),X_j^n(s)) \, ds$, $B_{\frac{j}{n}}(\cdot) - \int_0^\cdot \frac{1}{\sigma(X_j^n(s))} \frac{1}{n\beta_n} ( 1 - \xi_{ji}^n ) b(X_j^n(s),X_i^n(s)) \, ds$ and $B_{\frac{k}{n}}$, $k \ne i,j$, are independent Brownian motion under $Q^{i,j,n}$,
	and
	\begin{equation}
		\label{eq:Girsanov-0}
		\Pmb\left( \left(X_i^n,X_j^n,X_{\frac{i}{n}},X_{\frac{j}{n}}\right) \in \cdot \,|\, \xi_{ij}^n=1 \right) = Q^{i,j,n}\left( \left(X_i^n,X_j^n,\Xtil_{\frac{i}{n}},\Xtil_{\frac{j}{n}}\right) \in \cdot \right)
	\end{equation}
	by Girsanov's theorem,
	and
	\begin{equation*}
		\Emb \left[ \left( \frac{dQ^{i,j,n}}{d\Pmb} \right)^m \right] \le \exp \left\{ \frac{m|m-1|\kappa}{(n\beta_n)^2} \right\}, \quad m \ge 0.
	\end{equation*}
	From this it then follows that
	\begin{align}
		\Emb \left[ \left| \frac{dQ^{i,j,n}}{d\Pmb} - 1\right|^m \right] & \le \sqrt{ \Emb \left[ \left| \frac{dQ^{i,j,n}}{d\Pmb} - 1\right|^2 \right] \Emb \left[ \left| \frac{dQ^{i,j,n}}{d\Pmb} - 1\right|^{2m-2} \right] } \notag \\
		& \le \sqrt{ \Emb \left[ \left( \frac{dQ^{i,j,n}}{d\Pmb} \right)^2 - 1 \right] \kappa(m) \Emb \left[ \left| \frac{dQ^{i,j,n}}{d\Pmb} \right|^{2m-2} + 1 \right] } \notag \\
		& \le \sqrt{ \left[ 1 + \frac{\kappa(m)}{(n\beta_n)^2} - 1 \right] \kappa(m) } \le \frac{\kappa(m)}{n\beta_n}, \quad m \ge 1, \label{eq:Girsanov-1}
	\end{align}
	where the third inequality uses the assumption that $\lim_{n \to \infty} n\beta_n = \infty$.
	From \eqref{eq:Girsanov-0} we have
	\begin{align*}
		\Emb \left[ \xi_{ij}^n (X_j^n(s)-X_{\frac{j}{n}}(s))^2 \right] & = \Emb \left[ (X_j^n(s)-X_{\frac{j}{n}}(s))^2 \,|\, \xi_{ij}^n=1 \right] \beta_n G_n(\frac{i}{n},\frac{j}{n}) \\
		& = \Emb_{Q^{i,j,n}} \left[ (X_j^n(s)-\Xtil_{\frac{j}{n}}(s))^2 \right] \beta_n G_n(\frac{i}{n},\frac{j}{n}).
	\end{align*}
	Note that
	\begin{align*}
		\Emb_{Q^{i,j,n}} \left[ (X_j^n(s)-\Xtil_{\frac{j}{n}}(s))^2 \right] & = \Emb \left[ (X_j^n(s)-\Xtil_{\frac{j}{n}}(s))^2 \frac{dQ^{i,j,n}}{d\Pmb} \right] \\
		& \le 2\Emb \left[ (X_j^n(s)-X_{\frac{j}{n}}(s))^2 \frac{dQ^{i,j,n}}{d\Pmb} \right] + 2\Emb \left[ (X_{\frac{j}{n}}(s)-\Xtil_{\frac{j}{n}}(s))^2 \frac{dQ^{i,j,n}}{d\Pmb} \right].
	\end{align*}
	For the first term, using Holder's inequality, Lemma \ref{lem:sparse-moment} and \eqref{eq:Girsanov-1} we have
	\begin{align*}
		& \Emb \left[ (X_j^n(s)-X_{\frac{j}{n}}(s))^2 \frac{dQ^{i,j,n}}{d\Pmb} \right] - \Emb \left[ (X_j^n(s)-X_{\frac{j}{n}}(s))^2 \right] \\
		& \le \left( \Emb \left[ (X_j^n(s)-X_{\frac{j}{n}}(s))^{2p} \right] \right)^{1/p} \left( \Emb \left[ \left|\frac{dQ^{i,j,n}}{d\Pmb}-1\right|^{q} \right] \right)^{1/q} \\
		& \le \frac{\kappa(q)}{(n\beta_n)^{1/q}}, \quad \forall q > 1.
	\end{align*}
	For the second term, using Holder's inequality and \eqref{eq:moment-auxiliary} we have
	\begin{align*}
		\Emb \left[ \left(X_{\frac{j}{n}}(s)-\Xtil_{\frac{j}{n}}(s)\right)^2 \frac{dQ^{i,j,n}}{d\Pmb} \right] \le \sqrt{\Emb \left[ \left(X_{\frac{j}{n}}(s)-\Xtil_{\frac{j}{n}}(s)\right)^4 \right] \Emb \left[\left(\frac{dQ^{i,j,n}}{d\Pmb}\right)^2\right]} \le \frac{\kappa}{(n\beta_n)^2}.
	\end{align*}
	Combining these three estimates gives
	\begin{align}
		\label{eq:moment}
		\Emb \left[ \xi_{ij}^n (X_j^n(s)-X_{\frac{j}{n}}(s))^2 \right] \le \left( 2\Emb \left[ (X_j^n(s)-X_{\frac{j}{n}}(s))^2 \right] + \frac{\kappa(q)}{(n\beta_n)^{1/q}} \right) \beta_n G_n(\frac{i}{n},\frac{j}{n}).
	\end{align}
	
	Now fix $i,j,k \in \{1,\dotsc,n\}$ with $j \ne k$.
	The following argument is similar to the above change of measure, but we provide the proof for completeness.
	The intuition is again that $\Emb \left[ \xi_{ij}^n \xi_{ik}^n |X_j^n(s)-X_{\frac{j}{n}}(s)| |X_k^n(s)-X_{\frac{k}{n}}(s)| \right]$ is roughly $\Emb \left[ |X_j^n(s)-X_{\frac{j}{n}}(s)| |X_k^n(s)-X_{\frac{k}{n}}(s)| \right] \beta_n^2 G_n(\frac{i}{n},\frac{j}{n}) G_n(\frac{i}{n},\frac{k}{n})$ up to some errors, namely $(\xi_{ij}^n,\xi_{ik}^n)$ is kind of independent of $(X_j^n(s)-X_{\frac{j}{n}}(s))(X_k^n(s)-X_{\frac{k}{n}}(s))$.
	For this, we want to compare $\Lmc(X_j^n,X_{\frac{j}{n}},X_k^n,X_{\frac{k}{n}} \,|\, \xi_{ij}^n=1,\xi_{ik}^n=1)$ and $\Lmc(X_j^n,X_{\frac{j}{n}},X_k^n,X_{\frac{k}{n}})$ using Girsanov's theorem.
	This heuristic suggests considering the following auxiliary processes
	\begin{align*}
		\Xtil_{\frac{i}{n}}(t) & = X_{\frac{i}{n}}(0) + \int_0^t \int_I \int_{\Rmb^d} b(\Xtil_{\frac{i}{n}}(s),x)G(\frac{i}{n},v)\,\mu_{v,s}(dx)\,dv\,ds + \int_0^t \sigma(\Xtil_{\frac{i}{n}}(s))\,dB_{\frac{i}{n}}(s) \\
		& \quad + \sum_{l=j,k} \int_0^t \frac{\sigma(\Xtil_{\frac{i}{n}}(s))}{\sigma(X_i^n(s))} \frac{1}{n\beta_n} \left( \xi_{il}^n - 1 \right) b(X_i^n(s),X_l^n(s)) \, ds, \\
		\Xtil_{\frac{j}{n}}(t) & = X_{\frac{j}{n}}(0) + \int_0^t \int_I \int_{\Rmb^d} b(\Xtil_{\frac{j}{n}}(s),x)G(\frac{j}{n},v)\,\mu_{v,s}(dx)\,dv\,ds + \int_0^t \sigma(\Xtil_{\frac{j}{n}}(s))\,dB_{\frac{j}{n}}(s) \\
		& \quad + \int_0^t \frac{\sigma(\Xtil_{\frac{j}{n}}(s))}{\sigma(X_j^n(s))} \frac{1}{n\beta_n} \left( \xi_{ji}^n - 1 \right) b(X_j^n(s),X_i^n(s)) \, ds \\
		\Xtil_{\frac{k}{n}}(t) & = X_{\frac{k}{n}}(0) + \int_0^t \int_I \int_{\Rmb^d} b(\Xtil_{\frac{k}{n}}(s),x)G(\frac{k}{n},v)\,\mu_{v,s}(dx)\,dv\,ds + \int_0^t \sigma(\Xtil_{\frac{k}{n}}(s))\,dB_{\frac{k}{n}}(s) \\
		& \quad + \int_0^t \frac{\sigma(\Xtil_{\frac{k}{n}}(s))}{\sigma(X_k^n(s))} \frac{1}{n\beta_n} \left( \xi_{ki}^n - 1 \right) b(X_k^n(s),X_i^n(s)) \, ds.
	\end{align*}
	Note that the existence and uniqueness of such processes are again guaranteed by the bounded and Lipschitz properties of $b$, $\sigma$ and $\sigma^{-1}$.
	Also using these properties and Gronwall's inequality we can show that
	\begin{equation}
		\label{eq:moment-auxiliary-jk}
		\Emb \|X_{\frac{j}{n}}-\Xtil_{\frac{j}{n}}\|_{*,T}^m + \Emb \|X_{\frac{k}{n}}-\Xtil_{\frac{k}{n}}\|_{*,T}^m \le \frac{\kappa(m)}{(n\beta_n)^m}, \quad m \ge 0.
	\end{equation}
	Define $Q^{i,j,k,n}$ by
	\begin{align*}
		\frac{dQ^{i,j,k,n}}{d\Pmb} & = \Emc_T \left( \sum_{l=j,k} \int_0^\cdot \frac{1}{\sigma(X_i^n(s))} \frac{1}{n\beta_n} \left( 1 - \xi_{il}^n \right) b(X_i^n(s),X_l^n(s)) \, dB_{\frac{i}{n}} \right. \\
		& \quad \left. + \sum_{l=j,k} \int_0^\cdot \frac{1}{\sigma(X_l^n(s))} \frac{1}{n\beta_n} \left( 1 - \xi_{li}^n \right) b(X_l^n(s),X_i^n(s)) \, dB_{\frac{l}{n}} \right).
	\end{align*}
	Since $b$ and $\sigma^{-1}$ are bounded, we have 
	\begin{align}
		& \Pmb\left( \left(X_i^n,X_j^n,X_k^n,X_{\frac{i}{n}},X_{\frac{j}{n}},X_{\frac{k}{n}}\right) \in \cdot \,|\, \xi_{ij}^n=1, \xi_{ik}^n=1 \right) \notag \\
		& \quad = Q^{i,j,k,n}\left( \left(X_i^n,X_j^n,X_k^n,\Xtil_{\frac{i}{n}},\Xtil_{\frac{j}{n}},\Xtil_{\frac{k}{n}}\right) \in \cdot \right) \label{eq:Girsanov-jk-0}
	\end{align}
	by Girsanov's theorem,
	and
	\begin{equation*}
		\Emb \left[ \left( \frac{dQ^{i,j,k,n}}{d\Pmb} \right)^m \right] \le \exp \left\{ \frac{m|m-1|\kappa}{(n\beta_n)^2} \right\}, m \ge 0.
	\end{equation*}
	Using this and the assumption that $\lim_{n \to \infty} n\beta_n = \infty$, we have
	\begin{equation}
		\label{eq:Girsanov-jk-1}
		\Emb \left[ \left| \frac{dQ^{i,j,n}}{d\Pmb} - 1\right|^m \right] \le \sqrt{ \Emb \left[ \left| \frac{dQ^{i,j,n}}{d\Pmb} - 1\right|^2 \right] \Emb \left[ \left| \frac{dQ^{i,j,n}}{d\Pmb} - 1\right|^{2m-2} \right] } \le \frac{\kappa(m)}{n\beta_n}, m \ge 1.
	\end{equation}
	From \eqref{eq:Girsanov-jk-0} we have
	\begin{align*}
		& \Emb \left[ \xi_{ij}^n \xi_{ik}^n |X_j^n(s)-X_{\frac{j}{n}}(s)| |X_k^n(s)-X_{\frac{k}{n}}(s)| \right] \\
		& = \Emb \left[ |X_j^n(s)-X_{\frac{j}{n}}(s)| |X_k^n(s)-X_{\frac{k}{n}}(s)| \,|\, \xi_{ij}^n=1, \xi_{ik}^n=1 \right] \beta_n^2 G_n(\frac{i}{n},\frac{j}{n}) G_n(\frac{i}{n},\frac{k}{n}) \\
		& = \Emb_{Q^{i,j,k,n}} \left[ |X_j^n(s)-\Xtil_{\frac{j}{n}}(s)| |X_k^n(s)-\Xtil_{\frac{k}{n}}(s)| \right] \beta_n^2 G_n(\frac{i}{n},\frac{j}{n}) G_n(\frac{i}{n},\frac{k}{n}).
	\end{align*}
	Note that
	\begin{align*}
		& \Emb_{Q^{i,j,k,n}} \left[ |X_j^n(s)-\Xtil_{\frac{j}{n}}(s)| |X_k^n(s)-\Xtil_{\frac{k}{n}}(s)| \right] \\
		& \le \half \Emb_{Q^{i,j,k,n}} \left[ |X_j^n(s)-\Xtil_{\frac{j}{n}}(s)|^2 \right] + \half \Emb_{Q^{i,j,k,n}} \left[ |X_k^n(s)-\Xtil_{\frac{k}{n}}(s)|^2 \right] \\
		& = \half \Emb \left[ (X_j^n(s)-\Xtil_{\frac{j}{n}}(s))^2 \frac{dQ^{i,j,k,n}}{d\Pmb} \right] + \half \Emb \left[ (X_k^n(s)-\Xtil_{\frac{k}{n}}(s))^2 \frac{dQ^{i,j,k,n}}{d\Pmb} \right] \\
		& \le \Emb \left[ (X_j^n(s)-X_{\frac{j}{n}}(s))^2 \frac{dQ^{i,j,k,n}}{d\Pmb} \right] + \Emb \left[ (X_{\frac{j}{n}}(s)-\Xtil_{\frac{j}{n}}(s))^2 \frac{dQ^{i,j,k,n}}{d\Pmb} \right] \\
		& \quad + \Emb \left[ (X_k^n(s)-X_{\frac{k}{n}}(s))^2 \frac{dQ^{i,j,k,n}}{d\Pmb} \right] + \Emb \left[ (X_{\frac{k}{n}}(s)-\Xtil_{\frac{k}{n}}(s))^2 \frac{dQ^{i,j,k,n}}{d\Pmb} \right].
	\end{align*}
	Similar to the derivation of \eqref{eq:moment}, using Holder's inequality, Lemma \ref{lem:sparse-moment}, \eqref{eq:Girsanov-jk-1}, and \eqref{eq:moment-auxiliary-jk} we have
	\begin{align}
		& \Emb \left[ \xi_{ij}^n \xi_{ik}^n |X_j^n(s)-X_{\frac{j}{n}}(s)| |X_k^n(s)-X_{\frac{k}{n}}(s)| \right] \label{eq:moment-jk} \\
		& \quad \le \left( \Emb \left[ (X_j^n(s)-X_{\frac{j}{n}}(s))^2 \right] + \Emb \left[ (X_k^n(s)-X_{\frac{k}{n}}(s))^2 \right] + \frac{\kappa_q}{(n\beta_n)^{1/q}} \right) \beta_n^2 G_n(\frac{i}{n},\frac{j}{n}) G_n(\frac{i}{n},\frac{k}{n}). \notag
	\end{align}
	
	Applying \eqref{eq:moment} and \eqref{eq:moment-jk} to \eqref{eq:Tn2-state-dependent} gives
	\begin{equation}
		\Rmc_s^{n,2} \le \frac{\kappa}{n} \sum_{j=1}^n \Emb |X_j^n(s)-X_{\frac{j}{n}}(s)|^2 + \frac{\kappa(q)}{(n\beta_n)^{1/q}}. \label{eq:sparse-R2}
	\end{equation}
	
	Finally, combining \eqref{eq:sparse1}--\eqref{eq:sparse-R3} and \eqref{eq:sparse-R2} gives
	\begin{equation*}
		\frac{1}{n} \sum_{i=1}^n \Emb \|X_i^n-X_{\frac{i}{n}}\|_{*,t}^2 \le \kappa \int_0^t \frac{1}{n} \sum_{i=1}^n \Emb \|X_i^n-X_{\frac{i}{n}}\|_{*,s}^2 + \frac{\kappa(q)}{(n\beta_n)^{1/q}} + \kappa \int_0^t \Rmc_s^{n,4} \, ds.
	\end{equation*}
	Using the Gronwall's inequality and \eqref{eq:sparse-R4} completes the proof. 
\end{proof}

\subsection{Proof of Theorem \ref{thm:sparse-LLN}}
\label{sec:sparse-LLN-pf}

	The proof is similar to that of Theorem \ref{thm:dense-LLN}.
	If Condition \ref{cond:limit-G}(b) holds, then \eqref{eq:sparse-X-cvg} follows on a direct application of Lemma \ref{lem:new-sparse} by sending $M \to \infty$.
	
	Next we prove \eqref{eq:sparse-mu-cvg}.
	Fix $\eta \in (0,1)$.
	Since $0 \le G \le 1$, using Lusin's theorem (\cite[Theorem 2.24]{Rudin1987real}), we can approximate $G$ by continuous functions $\Gtil$ with $0 \le \Gtil \le 1$.
	Namely, we can find some continuous graphon $\Gtil = \Gtil_\eta$ such that $$\|\Gtil-G\| < \eta.$$
	It then follows from Theorem \ref{thm:limit-system-property}(c) that 
	\begin{equation}
		\label{eq:sparse-LLN-pf-0}
		{W_{2,T}}(\mubar^{\Gtil},\mubar) \le \kappa \eta
	\end{equation}
	{where we denote by $X^\Gtil = \{X_u^\Gtil : u \in I\}$} the solution of \eqref{eq:limit-system-sparse} corresponding to the graphon $\Gtil$ and let $\mubar^\Gtil := \int_I \Lmc(X^\Gtil_u) \, du$.
	Since Condition \ref{cond:limit-G} holds for $\Gtil$, from Lemma \ref{lem:new2} we have 
	\begin{equation}
		\label{eq:sparse-LLN-pf-1}
		\frac{1}{n} \sum_{i=1}^n \delta_{X_{\frac{i}{n}}^\Gtil} \to \mubar^\Gtil
	\end{equation}
	in $\Pmc(\Cmc_d)$ in probability as $n \to \infty$.
	Note that
	\begin{equation*}
		[{W_{2,T}}(\mu^n,\frac{1}{n} \sum_{i=1}^n \delta_{X_{\frac{i}{n}}^\Gtil})]^2 \le \frac{1}{n} \sum_{i=1}^n \|X_i^n-X_{\frac{i}{n}}^\Gtil\|_{*,T}^2.
	\end{equation*}
	Since Condition \ref{cond:limit-G} holds for $\Gtil$, it follows from Lemma \ref{lem:new-sparse} and Remark \ref{rmk:graphon-convergence} that
	\begin{equation*}
		\limsup_{n \to \infty} \Emb {W_{2,T}}(\mu^n,\frac{1}{n} \sum_{i=1}^n \delta_{X_{\frac{i}{n}}^\Gtil}) \le \sqrt{\kappa(M) \limsup_{n \to \infty} \|G_n-\Gtil\| + \frac{\kappa}{M^\varepsilon}} \le \sqrt{\kappa(M) \eta + \frac{\kappa}{M^\varepsilon}}.
	\end{equation*}
	{
	Taking $\eta \to 0$ and then $M \to \infty$ gives
	\begin{equation}
		\label{eq:sparse-LLN-pf-2}
		\limsup_{\eta \to 0} \limsup_{n \to \infty} \Emb {W_{2,T}}(\mu^n,\frac{1}{n} \sum_{i=1}^n \delta_{X_{\frac{i}{n}}^\Gtil}) = 0.
	\end{equation}	
	Combining \eqref{eq:sparse-LLN-pf-0}--\eqref{eq:sparse-LLN-pf-2}, taking $n \to \infty$ and then sending $\eta \to 0$, we have \eqref{eq:sparse-mu-cvg}.}
	\qed

\subsection{Proof of Theorem \ref{thm:sparse-LLN-special}}

	Note that the estimates in \eqref{eq:sparse1}--\eqref{eq:sparse-R3} and \eqref{eq:sparse-R2} still hold under Conditions \ref{cond:limit-G-special}, \ref{cond:standing-sparse} and \ref{cond:sparse-Gn-special}. 
	In order to show \eqref{eq:sparse-special-X-cvg}, it suffices to argue
	$$\Rmc^{n,4}_s \le \frac{\kappa}{n}.$$	
	For this, using Condition \ref{cond:sparse-Gn-special} we have
	\begin{align*}
		\Rmc^{n,4}_s & = \frac{1}{n} \sum_{i=1}^n \Emb \left| \frac{1}{n} \sum_{j=1}^n \int_{\Rmb^d} b(X_{\frac{i}{n}}(s),x)G(\frac{i}{n},\frac{j}{n})\,\mu_{\frac{j}{n},s}(dx) \right. \\
		& \left. \qquad - \int_I \int_{\Rmb^d} b(X_{\frac{i}{n}}(s),x)G(\frac{i}{n},v)\,\mu_{v,s}(dx)\,dv \right|^2\\
		& = \frac{1}{n} \sum_{i=1}^n \Emb \left| \int_I \int_{\Rmb^d} b(X_{\frac{i}{n}}(s),x)G(\frac{i}{n},\frac{\lceil nv \rceil}{n})\,\mu_{\frac{\lceil nv \rceil}{n},s}(dx) \,dv \right. \\
		& \left. \qquad - \int_I \int_{\Rmb^d} b(X_{\frac{i}{n}}(s),x)G(\frac{i}{n},v)\,\mu_{v,s}(dx)\,dv \right|^2 \\
		& \le 2 \frac{1}{n} \sum_{i=1}^n \Emb \left| \int_I \int_{\Rmb^d} b(X_{\frac{i}{n}}(s),x) \left[ G(\frac{i}{n},\frac{\lceil nv \rceil}{n}) - G(\frac{i}{n},v) \right] \mu_{\frac{\lceil nv \rceil}{n},s}(dx)\,dv \right|^2 \\
		& \quad + 2 \frac{1}{n} \sum_{i=1}^n \Emb \left| \int_I \left[ \int_{\Rmb^d} b(X_{\frac{i}{n}}(s),x)\,\mu_{\frac{\lceil nv \rceil}{n},s}(dx) - \int_{\Rmb^d} b(X_{\frac{i}{n}}(s),x)\,\mu_{v,s}(dx)\right]G(\frac{i}{n},v)\,dv \right|^2 \\
		& \le \frac{\kappa}{n^2},
	\end{align*}
	where the last inequality uses Condition \ref{cond:limit-G-special}, Theorem \ref{thm:limit-system-property}(b) and Remark \ref{rmk:Wass-dual}.
	This completes the proof of Theorem \ref{thm:sparse-LLN-special}. 
	\qed

\section{Acknowledgment} 

{We would like to thank the anonymous referees for their careful reading and many valuable suggestions on the paper.}

\bibliographystyle{plain}

\begin{bibdiv}
\begin{biblist}

\bib{BaladronFaugeras2012}{article}{
      author={Baladron, J.},
      author={Fasoli, D.},
      author={Faugeras, O.},
      author={Touboul, J.},
       title={{Mean-field description and propagation of chaos in networks of
  Hodgkin--Huxley and FitzHugh--Nagumo neurons}},
        date={2012},
     journal={The Journal of Mathematical Neuroscience},
      volume={2},
      number={1},
       pages={10},
}

\bib{basak2016large}{article}{
      author={Basak, Anirban},
      author={Bhamidi, Shankar},
      author={Chakraborty, Suman},
      author={Nobel, Andrew},
       title={Large subgraphs in pseudo-random graphs},
        date={2016},
     journal={arXiv preprint arXiv:1610.03762},
}

\bib{BayraktarWu2019mean}{article}{
      author={Bayraktar, Erhan},
      author={Wu, Ruoyu},
       title={Mean field interaction on random graphs with dynamically changing
  multi-color edges},
        date={2021},
     journal={Stochastic Processes and their Applications},
      volume={141},
       pages={197\ndash 244},
}

\bib{Beeson2012foundations}{book}{
      author={Beeson, Michael~J},
       title={Foundations of constructive mathematics: {M}etamathematical
  studies},
   publisher={Springer Science \& Business Media},
        date={2012},
      volume={6},
}

\bib{BetCoppiniNardi2020weakly}{article}{
      author={Bet, Gianmarco},
      author={Coppini, Fabio},
      author={Nardi, Francesca~R},
       title={Weakly interacting oscillators on dense random graphs},
        date={June 2020},
     journal={arXiv preprint arXiv:2006.07670},
}

\bib{BhamidiBudhirajaWu2019weakly}{article}{
      author={Bhamidi, Shankar},
      author={Budhiraja, Amarjit},
      author={Wu, Ruoyu},
       title={Weakly interacting particle systems on inhomogeneous random
  graphs},
        date={2019},
     journal={Stochastic Processes and their Applications},
      volume={129},
      number={6},
       pages={2174\ndash 2206},
}

\bib{bhamidi2018weighted}{article}{
      author={Bhamidi, Shankar},
      author={Chakraborty, Suman},
      author={Cranmer, Skyler},
      author={Desmarais, Bruce},
       title={Weighted exponential random graph models: Scope and large network
  limits},
        date={2018},
     journal={Journal of Statistical Physics},
      volume={173},
      number={3-4},
       pages={704\ndash 735},
}

\bib{bollobas2010percolation}{article}{
      author={Bollob{\'a}s, B{\'e}la},
      author={Borgs, Christian},
      author={Chayes, Jennifer},
      author={Riordan, Oliver},
      author={others},
       title={Percolation on dense graph sequences},
        date={2010},
     journal={The Annals of Probability},
      volume={38},
      number={1},
       pages={150\ndash 183},
}

\bib{BollobasJansonRiordan2007phase}{article}{
      author={Bollob{\'a}s, B{\'e}la},
      author={Janson, Svante},
      author={Riordan, Oliver},
       title={The phase transition in inhomogeneous random graphs},
        date={2007},
     journal={Random Structures \& Algorithms},
      volume={31},
      number={1},
       pages={3\ndash 122},
}

\bib{BorgsChayesCohnZhao2019Lp}{article}{
      author={Borgs, Christian},
      author={Chayes, Jennifer},
      author={Cohn, Henry},
      author={Zhao, Yufei},
       title={An $L^p$ theory of sparse graph convergence I: Limits, sparse
  random graph models, and power law distributions},
        date={2019},
     journal={Transactions of the American Mathematical Society},
      volume={372},
      number={5},
       pages={3019\ndash 3062},
}

\bib{BorgsChayesCohnZhao2018Lp}{article}{
      author={Borgs, Christian},
      author={Chayes, Jennifer~T},
      author={Cohn, Henry},
      author={Zhao, Yufei},
       title={An $L^p$ theory of sparse graph convergence II: LD convergence,
  quotients and right convergence},
        date={2018},
     journal={The Annals of Probability},
      volume={46},
      number={1},
       pages={337\ndash 396},
}

\bib{BudhirajaDupuisFischerRamanan2015limits}{article}{
      author={Budhiraja, A.},
      author={Dupuis, P.},
      author={Fischer, M.},
      author={Ramanan, K.},
       title={{Limits of relative entropies associated with weakly interacting
  particle systems}},
        date={2015},
     journal={Electronic Journal of Probability},
      volume={20},
      number={80},
       pages={1\ndash 22},
}

\bib{BudhirajaWu2016some}{article}{
      author={Budhiraja, A.},
      author={Wu, R.},
       title={{Some fluctuation results for weakly interacting multi-type
  particle systems}},
        date={2016},
        ISSN={0304-4149},
     journal={Stochastic Processes and their Applications},
      volume={126},
      number={8},
       pages={2253\ndash 2296},
  url={http://www.sciencedirect.com/science/article/pii/S0304414916000211},
}

\bib{BudhirajaMukherjeeWu2019supermarket}{article}{
      author={Budhiraja, Amarjit},
      author={Mukherjee, Debankur},
      author={Wu, Ruoyu},
       title={Supermarket model on graphs},
        date={201906},
     journal={Ann. Appl. Probab.},
      volume={29},
      number={3},
       pages={1740\ndash 1777},
         url={https://doi.org/10.1214/18-AAP1437},
}

\bib{CainesHuang2018graphon}{inproceedings}{
      author={Caines, Peter~E},
      author={Huang, Minyi},
       title={Graphon mean field games and the {GMFG} equations},
organization={IEEE},
        date={2018},
   booktitle={{2018 IEEE Conference on Decision and Control (CDC)}},
       pages={4129\ndash 4134},
}

\bib{CainesHuang2020graphon}{article}{
      author={Caines, Peter~E},
      author={Huang, Minyi},
       title={Graphon mean field games and their equations},
        date={2021},
     journal={SIAM Journal on Control and Optimization},
      volume={59},
      number={6},
       pages={4373\ndash 4399},
}

\bib{Carmona2019stochastic}{article}{
      author={Carmona, Ren{\'e}},
      author={Cooney, Daniel~B},
      author={Graves, Christy~V},
      author={Lauriere, Mathieu},
       title={Stochastic graphon games: I. the static case},
        date={2022},
     journal={Mathematics of Operations Research},
      volume={47},
      number={1},
       pages={750\ndash 778},
}

\bib{chatterjee2013estimating}{article}{
      author={Chatterjee, Sourav},
      author={Diaconis, Persi},
      author={others},
       title={Estimating and understanding exponential random graph models},
        date={2013},
     journal={The Annals of Statistics},
      volume={41},
      number={5},
       pages={2428\ndash 2461},
}

\bib{Collet2014macroscopic}{article}{
      author={Collet, F.},
       title={{Macroscopic limit of a bipartite Curie--Weiss model: A dynamical
  approach}},
        date={2014},
     journal={Journal of Statistical Physics},
      volume={157},
      number={6},
       pages={1301\ndash 1319},
}

\bib{ContucciGalloMenconi2008phase}{article}{
      author={Contucci, P.},
      author={Gallo, I.},
      author={Menconi, G.},
       title={{Phase transitions in social sciences: Two-population mean field
  theory}},
        date={2008},
     journal={International Journal of Modern Physics B},
      volume={22},
      number={14},
       pages={2199\ndash 2212},
}

\bib{Coppini2019long}{article}{
      author={Coppini, Fabio},
       title={{Long time dynamics for interacting oscillators on graphs}},
        date={2022},
     journal={The Annals of Applied Probability},
      volume={32},
      number={1},
       pages={360 \ndash  391},
         url={https://doi.org/10.1214/21-AAP1680},
}

\bib{CoppiniDietertGiacomin2019law}{article}{
      author={Coppini, Fabio},
      author={Dietert, Helge},
      author={Giacomin, Giambattista},
       title={A law of large numbers and large deviations for interacting
  diffusions on {E}rdős–{R}ényi graphs},
        date={2019},
     journal={Stochastics and Dynamics},
      volume={0},
      number={0},
       pages={2050010},
      eprint={https://doi.org/10.1142/S0219493720500100},
         url={https://doi.org/10.1142/S0219493720500100},
}

\bib{Delarue2017mean}{article}{
      author={Delarue, Fran{\c{c}}ois},
       title={Mean field games: A toy model on an Erd{\"o}s-Renyi graph.},
        date={2017},
     journal={ESAIM: Proceedings and Surveys},
      volume={60},
       pages={1\ndash 26},
}

\bib{Delattre2016}{article}{
      author={Delattre, Sylvain},
      author={Giacomin, Giambattista},
      author={Lu{\c{c}}on, Eric},
       title={A note on dynamical models on random graphs and
  {F}okker--{P}lanck equations},
        date={2016},
        ISSN={1572-9613},
     journal={Journal of Statistical Physics},
      volume={165},
      number={4},
       pages={785\ndash 798},
         url={https://doi.org/10.1007/s10955-016-1652-3},
}

\bib{Dudley2018real}{book}{
      author={Dudley, Richard~M},
       title={Real analysis and probability},
   publisher={CRC Press},
        date={2018},
}

\bib{DupuisMedvedev2020large}{article}{
      author={Dupuis, Paul},
      author={Medvedev, Georgi~S},
       title={The large deviation principle for interacting dynamical systems
  on random graphs},
        date={2022},
     journal={Communications in Mathematical Physics},
      volume={390},
      number={2},
       pages={545\ndash 575},
}

\bib{Hadamard2003lectures}{book}{
      author={Hadamard, Jacques},
       title={Lectures on {C}auchy's problem in linear partial differential
  equations},
   publisher={Courier Corporation},
        date={2003},
}

\bib{KaliuzhnyiMedvedev2018mean}{article}{
      author={Kaliuzhnyi-Verbovetskyi, Dmitry},
      author={Medvedev, Georgi~S},
       title={The mean field equation for the kuramoto model on graph sequences
  with non-lipschitz limit},
        date={2018},
     journal={SIAM Journal on Mathematical Analysis},
      volume={50},
      number={3},
       pages={2441\ndash 2465},
}

\bib{KaratzasShreve1991brownian}{book}{
      author={Karatzas, I.},
      author={Shreve, S.~E.},
       title={{Brownian Motion and Stochastic Calculus}},
      series={Graduate Texts in Mathematics},
   publisher={Springer New York},
        date={1991},
      volume={113},
        ISBN={9780387976556},
}

\bib{Kolokoltsov2010}{book}{
      author={Kolokoltsov, V.~N.},
       title={{Nonlinear Markov Processes and Kinetic Equations}},
      series={Cambridge Tracts in Mathematics},
   publisher={Cambridge University Press},
        date={2010},
      volume={182},
}

\bib{krivelevich2006pseudo}{incollection}{
      author={Krivelevich, Michael},
      author={Sudakov, Benny},
       title={Pseudo-random graphs},
        date={2006},
   booktitle={More sets, graphs and numbers},
   publisher={Springer},
       pages={199\ndash 262},
}

\bib{Lovasz2012large}{book}{
      author={Lov{\'a}sz, L{\'a}szl{\'o}},
       title={Large networks and graph limits},
   publisher={American Mathematical Soc.},
        date={2012},
      volume={60},
}

\bib{Lucon2020quenched}{article}{
      author={Luçon, Eric},
       title={Quenched asymptotics for interacting diffusions on inhomogeneous
  random graphs},
        date={2020},
        ISSN={0304-4149},
     journal={Stochastic Processes and their Applications},
      volume={130},
      number={11},
       pages={6783\ndash 6842},
  url={https://www.sciencedirect.com/science/article/pii/S0304414920303100},
}

\bib{McKean1967propagation}{incollection}{
      author={McKean, H.~P.},
       title={{Propagation of chaos for a class of non-linear parabolic
  equations}},
        date={1967},
   booktitle={Stochastic differential equations ({L}ecture {S}eries in
  {D}ifferential {E}quations, {S}ession 7, {C}atholic {U}niversity, 1967)},
   publisher={Air Force Office Sci. Res., Arlington, Va.},
       pages={41\ndash 57},
}

\bib{Medvedev2014nonlinear1}{article}{
      author={Medvedev, Georgi~S},
       title={The nonlinear heat equation on dense graphs and graph limits},
        date={2014},
     journal={SIAM Journal on Mathematical Analysis},
      volume={46},
      number={4},
       pages={2743\ndash 2766},
}

\bib{Medvedev2014nonlinear2}{article}{
      author={Medvedev, Georgi~S},
       title={The nonlinear heat equation on w-random graphs},
        date={2014},
     journal={Archive for Rational Mechanics and Analysis},
      volume={212},
      number={3},
       pages={781\ndash 803},
}

\bib{NadtochiyShkolnikov2018mean}{article}{
      author={Nadtochiy, Sergey},
      author={Shkolnikov, Mykhaylo},
       title={Mean field systems on networks, with singular interaction through
  hitting times},
        date={2020},
     journal={The Annals of Probability},
      volume={48},
      number={3},
       pages={1520\ndash 1556},
}

\bib{OliveiraReis2019interacting}{article}{
      author={Oliveira, Roberto~I.},
      author={Reis, Guilherme~H.},
       title={Interacting diffusions on random graphs with diverging average
  degrees: Hydrodynamics and large deviations},
        date={2019Jul},
        ISSN={1572-9613},
     journal={Journal of Statistical Physics},
         url={https://doi.org/10.1007/s10955-019-02332-1},
}

\bib{PariseOzdaglar2019graphon}{article}{
      author={Parise, Francesca},
      author={Ozdaglar, Asuman~E},
       title={Graphon games: A statistical framework for network games and
  interventions},
        date={2019},
     journal={Available at SSRN: https://ssrn.com/abstract=3437293},
}

\bib{Rosenthal1970subspaces}{article}{
      author={Rosenthal, Haskell~P},
       title={On the subspaces of {$L^p$} $(p> 2)$ spanned by sequences of
  independent random variables},
        date={1970},
     journal={Israel Journal of Mathematics},
      volume={8},
      number={3},
       pages={273\ndash 303},
}

\bib{Rudin1987real}{book}{
      author={Rudin, W.},
       title={Real and complex analysis},
      series={Mathematics series},
   publisher={McGraw-Hill},
        date={1987},
        ISBN={9780071002769},
         url={https://books.google.com/books?id=NmW7QgAACAAJ},
}

\bib{Schultz1969multivariate}{article}{
      author={Schultz, Martin~H.},
       title={L$^\infty$-multivariate approximation theory},
        date={1969},
        ISSN={00361429},
     journal={SIAM Journal on Numerical Analysis},
      volume={6},
      number={2},
       pages={161\ndash 183},
         url={http://www.jstor.org/stable/2949744},
}

\bib{Sznitman1991}{incollection}{
      author={Sznitman, A-S.},
       title={{Topics in propagation of chaos}},
        date={1991},
   booktitle={Ecole d'{E}t{\'e} de {P}robabilit{\'e}s de {S}aint-{F}lour
  {XIX}---1989},
      editor={Hennequin, Paul-Louis},
      series={Lecture Notes in Mathematics},
      volume={1464},
   publisher={Springer Berlin Heidelberg},
     address={Berlin, Heidelberg},
       pages={165\ndash 251},
}

\end{biblist}
\end{bibdiv}

\end{document}